\documentclass{article}
\usepackage{graphicx} 
\usepackage{amsmath}
\usepackage{amsfonts}
\usepackage{amsthm}
\usepackage{url}
\usepackage{color}
\newtheorem{theorem}{Theorem}[section]
\newtheorem{lemma}[theorem]{Lemma}

\newtheorem{corollary}[theorem]{Corollary}

\newtheorem{remark}[theorem]{Remark}

\setlength{\textwidth}{13.5cm}
\usepackage{graphicx}
\usepackage{hyperref}
\usepackage{subfig}
\usepackage{float}
\floatstyle{ruled}
\newfloat{algorithm}{hbt}{alg}[section]
\floatname{algorithm}{Algorithm}
\setlength{\arraycolsep}{1pt}
 \setlength{\tabcolsep}{1mm}

\def \bI {\mathbf{I}}

\def \bv {{\boldsymbol v}}

\def \supp  {\,{\rm supp}\,}

\begin{document}
\title{A Random Integration Algorithm for High-dimensional Function Spaces}
\author{Liang Chen\thanks{Department of Mathematics, Jiujiang University, Jiujiang, 332000,
 P. R. China. E-mail address: {\it chenliang3@alumni.sysu.edu.cn}. Supported in part by National Natural Science Foundation of China under grant 12461019}.
 \quad Minqiang Xu\thanks{Corresponding Author. College of Science, Zhejiang University of Technology, Hangzhou, 310023, P. R. China. E-mail address: {\it mqxu@zjut.edu.cn}. Supported in part by National Natural Science Foundation of China (No. 12326346, 12326347), and by Zhejiang Provincial Natural Science Foundation of
China (No. ZCLY24A0101).}\quad  Haizhang Zhang\thanks{School of Mathematics (Zhuhai), Sun Yat-sen University, Zhuhai, 519082,
 P. R. China. E-mail address: {\it zhhaizh2@sysu.edu.cn}. Supported in part by National Natural Science Foundation of China under grant 12371103, and by Guangdong Basic and Applied Basic Research Foundation (2024A1515011194).}}
\date{}
\maketitle
\begin{abstract}
We introduce a novel random integration algorithm that exhibits a high convergence order for functions characterized by sparse frequencies or rapidly decaying Fourier coefficients. Specifically, for integration in periodic isotropic Sobolev spaces and isotropic Sobolev spaces with compact support, our approach achieves a nearly optimal root mean square error (RMSE) bound.  In contrast to previous nearly optimal algorithms, our method exhibits polynomial tractability. Our integration algorithm
also enjoys nearly optimal bound for weighted Sobolev space.  By incorporating the trick of change of variable, our algorithm is proven to achieve the semi-exponential
convergence order for the integration of analytic functions, which marks a significant
improvement over the previously obtained super-polynomial convergence order. Furthermore, for integration involving Wiener-type functions, the sample complexity of our algorithm remains independent of the decay rate of the Fourier coefficients.

\end{abstract}
\noindent{\bf Keywords:} Numerical integration; Monte Carlo; Curse of dimensionality; Sample complexity

\noindent{\bf MSC 2020:} 65D30, 68Q25, 65C05, 42B10

\section{Introduction}
This paper is concerned with numerically integrating multivariate functions defined over the $d$-dimensional unit cube. Denote 
$$\operatorname{INT}(f):=\int_{[0,1]^d} f(\boldsymbol{x}) d\boldsymbol{x}.$$
Evaluation of $\operatorname{INT}(f)$ amounts to estimating the value of $\hat{f}(0)$, where the Fourier transform is defined as
\begin{equation}\hat{f}(\boldsymbol{w})=\int_{ [0,1]^d}f(\boldsymbol{x})\exp(-2\pi i \boldsymbol{w} \cdot \boldsymbol{x})d\boldsymbol{x}.
\end{equation} 
For a given positive integer $N$, we introduce the following discrete Fourier transform 
\begin{equation}\label{dft}
\widetilde{f}^{(N)}(\boldsymbol{w})=\frac{1}{N^d}\sum_{ \boldsymbol{z}\in \mathbb{Z}_{N}^{d} }f(\boldsymbol{z}/N)\exp(-2\pi i \boldsymbol{w} \cdot \boldsymbol{z}/N),\quad \boldsymbol{w}\in \mathbb{Z}_{N}^{d},
\end{equation}
where $\mathbb{Z}_{N}^{d}:=\mathbb{Z}^d  \cap [0,N)^d$.

Inspired by the sparse Fourier transform (SFT) from signal processing \cite{chen2022note,gilbert2005improved,hassanieh2012nearly}, our methodology for estimating $\hat{f}(0)$ comprises two fundamental stages. First, we create a hash map that effectively disperses the frequencies, ensuring that the energy of frequencies near zero (excluding zero itself) is small. Second, we employ a low-pass filter to extract the low-frequency components.
Here, we utilize the hash mapping $\boldsymbol{A}_{N}$ developed in our previous paper \cite{chen2022note},
\begin{equation}
\boldsymbol{A}_{N}:=\begin{pmatrix}
\bI_{d-1} & {\bf 0}\\
\bv & h_d
\end{pmatrix},
\end{equation}
where  $\boldsymbol{A}_{N}$ is a $d$ by $d$ matrix, $N$ is some prime number, $\bI_{d-1}$ represents the identity matrix of order $d-1$, $\bv:=(h_1,h_2,\dots,h_{d-1})$, and $(\bv,h_d)$ is drawn from the uniform distribution on the set
$
\mathbb Z^{d}\cap [1,N)^{d}
$. 
Note that the distribution here differs from the setting in \cite{chen2022note}. Subsequently, we construct the following low-pass filter:
\begin{equation}\label{101}
  f_{\boldsymbol{A}_{N},L,r}(\boldsymbol{x}):=\sum_{|l|\le L }   f(\{\boldsymbol{A}_{N}^{\top}(\boldsymbol{x}-\boldsymbol{y}_{l})\}) G_{r,l},  
\end{equation}
where $\{t\}\in [0,1)$ denotes the fractional part of $t$ (for instance, $\{3.2\}=0.2$, $\{-1.3\}=0.7$),   for a vector $\boldsymbol{v}$, $\{\boldsymbol{v}\}$ signifies the operation of taking the fractional part of each component of the vector individually, $\boldsymbol{y}_{l}=(0,\dots,0,l/N)^{\top}$, and
$$G_{r,l}=\frac{1}{r\sqrt{2\pi}}\exp(-\frac{l^2}{2r^2}).$$
The comprehensive attributes of  $G_{r,l}$ are elaborated in Lemma \ref{seeea}. By letting  $\boldsymbol{z}=\boldsymbol{A}_{N}^{\top}\boldsymbol{x}$, and substituting it into (Eq.~(\ref{101})), we derive the following critical integration 
approximation of $\operatorname{INT}(f)$
\begin{equation}\label{1021}
  I(f)_{\boldsymbol{H}_{N},L,r,\boldsymbol{z}}:=\sum_{|l|\le L }   f\left(\left\{\frac{\boldsymbol{z}-l\boldsymbol{H}_{N}}{N}\right\}\right) G_{r,l},
\end{equation}
where $\boldsymbol{H}_{N}$ and $\boldsymbol{z}$ are drawn from the uniform distribution over $\mathbb Z^{d}\cap [1,N )^{d}$ and  $\mathbb Z^{d}\cap [0,N )^{d}$, respectively.

When employing other recent high-dimensional SFT methods for estimating numerical integration, as cited in references \cite{iwen2013improved,kammerer2021high,gross2022sparse}, it is not feasible to achieve the desired error bounds for functions in Sobolev spaces. Previously, Gilbert et.al \cite{gilbert2005improved}  introduced an algorithmic framework for high-dimensional SFT, which could potentially be adapted to develop integration algorithms that attain the desired error bound. However, such a framework would increase the upper bound by a factor of  $d^5$, as analyzed on page 61 of \cite{kammerer2021high}.   We remark that (Eq.~(\ref{1021})) exhibits a certain similarity to the quadrature formula introduced in \cite{haselgrove1961method}.

Utilizing (Eq.~(\ref{1021})), along with the local Monte Carlo sampling (see (Eq.~(\ref{MCC}))) and the median trick \cite{jerrum1986random,niemiro2009fixed,kunsch2019optimal,goda2022construction,pan2023super,pan2024super,goda2024universal,pan2025l_2,pan2025dimension,ye2025median}, we shall construct a novel randomized integration algorithm. Our method introduces a fresh understanding in the construction of numerical integration algorithms from the perspectives of filtering and dispersing sparse frequencies. The algorithm possesses several theoretical advantages compared to existing algorithms. We summarize these advantages as follows:
\begin{itemize}
\item {\it Our algorithm is nearly optimal for integration in the periodic isotropic Sobolev space as well as in the isotropic Sobolev space with compact support, 
while preserving polynomial tractability.
 Consequently, it is also nearly optimal and
polynomially tractable for smooth functions that vanish on the boundary. In contrast, previous works
\cite{krieg2017universal,ullrich2017Monte} have developed optimal integration algorithms for the mixed Sobolev space and
the isotropic Sobolev space with compact support. However, these algorithms suffer from a drawback: their RMSE upper bounds are not less than $2^{\Theta(d)}M^{s/d+1/2}$ (where $d$, $s$, and $M$ denotes the dimension, the order, and the sample size,  respectively, see Remark 3.2 in \cite{ullrich2017Monte}. 
In \cite{krieg2017universal}, the corresponding constant is larger than $1/(d^{d/2}(2^{1/d}-1)^d)$, and the order of the Sobolev space is required to be greater than $d/2$,  as observed from the proofs of Theorem 1 and Lemma 5 in \cite{krieg2017universal}), rendering them not polynomially tractable in terms of sample complexity. Furthermore, our upper bound for integration in the isotropic Sobolev space with compact support is superior to that presented in \cite{krieg2017universal,ullrich2017Monte} when $s\le \Theta(d)$ and $M\le 2^{2^{\Theta(d)}}$ (namely, $\log^{\Theta(1)} M \le 2^{\Theta(d)}$). Recently, \cite{chopin2024higher} has proposed a nearly optimal integration algorithm for general smooth functions and smooth functions that vanish on the boundary of $[0,1]^d$ (see Theorem 3.2 in \cite{chopin2024higher}). However, as acknowledged  in the ``Future work" section of  \cite{chopin2024higher}, they still struggle to overcome the curse of dimensionality in terms of sample complexity. For more details, see  Theorem \ref{T2}, Remark \ref{28888}, Corollary \ref{corollv}, and Remark \ref{2112233} in Section 3.}

\item {\it By incorporating the trick of change of variables,  the numerical integration of the analytic functions over $[0,1]^d$ is proven to achieve semi-exponential convergence order, which marks a significant improvement over the previously obtained super-polynomial convergence order \cite{pan2023super,pan2024super,suzuki2017super,dick2017construction} (our algorithmic analysis also applies to the infinitely differentiable functions defined in \cite{suzuki2017super,dick2017construction}). For more details, see Theorem \ref{ta1} and Remark \ref{jiexi} in Section 4.}

\item {\it Our algorithm achieves a convergence rate of order $\frac{(\log(M))^{ 2 }}{M}$ (where $M$ denotes the sample size) for integration in the subspace of Wiener algebra. In comparison to previous studies\cite{Dickwl,dick2017digital,goda,goda2023strong,chen2024information,krieg2024tractability,dick2024some}, the sample complexity of our algorithm is independent of the decay rate of Fourier coefficients and the Hölder continuity of the functions, provided that we choose a sufficiently large $N$ in (Eq.~(\ref{1021})). For more details, see Theorem \ref{weina} and Remarks \ref{recon}, \ref{reweina} in  Section 4.}

\item {\it Our algorithm is also nearly optimal for integration in weighted Sobolev space.      Unlike the algorithms \cite{kritzer2019lattice,dick2022component,kuo2023random,nuyens2023randomised,goda2024randomized}, our algorithm does not require prior information about weights, making it universally applicable to varying weights in the RMSE sense. The integration algorithms applicable to different weights under the worst-case scenario have been established in \cite{dick2006good,dick2012random,dick2021stability,dick2022lattice,goda2022construction}. For more details see Theorem \ref{T3} and Remark \ref{piu} in Section 5. Very recently,
\cite{goda2024simple} has presented a universal (with respect to both weights and order) nearly optimal algorithm for weighted Sobolev spaces in the scenario where the order $s>1/2$, whereas our nearly optimal algorithm is valid for the case where $s>0$. These two works are independent of each other and  different in terms of algorithms, the earliest version (for the ``weighted Korobov space'' of order $s>0$) of our paper has already been uploaded to arXiv \cite{chen2024random}. For more details, see Remark \ref{goda2}.
} 
\end{itemize}

 At the end of the introduction, we collect some notations that are frequently used in the paper. 
\begin{itemize}
    \item $\mathbb{N}$:\ the set of natural numbers; $\mathbb{N}_{0}=\mathbb{N} \cup \{0\}$; $\mathbb{Z}$: the set of all integers; For $N\in \mathbb{N}$, $\mathbb{Z}_{N}=\mathbb{Z}\cap [0,N)$.

    \item The torus is $\mathcal{T}=\mathbb{R}/\mathbb{Z} = [0,1)$. The symbol $\log$ refers to the natural logarithm. 

    \item $\#U$: the cardinality of a set $U$; $\mathbb{I}_{U}$: the indicator function of $U$.

    \item For two nonnegative functions $h$ and $g$ defined on a common domain, we express $h(\tau) = \Theta(g(\tau))$ if there exist positive  absolute  constants $C_1$ and $C_2$ such that $C_{1}g(\tau) \le h(\tau) \le C_{2}g(\tau)$ holds for all $\tau$ within the domain.

    \item $w \in [-b,b]\, ({\rm mod }\ N)$: if there exists an integer $a$ such that $w + aN \in [-b,b]$; $w \notin [-b,b]\, ({\rm mod }\ N)$ if no such integer $a$ exists.

    \item $\boldsymbol{\xi}\sim \mbox{unif}\, \Omega$: $\boldsymbol{\xi}$ is a random variable uniformly distributed over the set $\Omega$.

    \item For any function $f: \mathbb{Z}_{N}^d \rightarrow \mathbb{C}$, we define its expectation as $$\mathbb{E}_{\boldsymbol{z}} f(\boldsymbol{z}) = \frac{1}{N^d} \sum_{\boldsymbol{z} \in \mathbb{Z}_{N}^d} f(\boldsymbol{z}).$$ 

    \item Given $\boldsymbol{H}_{N} \in \mathbb{Z}^{d} \cap [1,N)^{d}$ and $B>0$, $U_{\boldsymbol{H}_{N},B} := \{\boldsymbol{w} \mid \boldsymbol{H}_{N} \cdot \boldsymbol{w} \in [-N/B, N/B]\, ({\rm mod}\  N)\}$.

    \item For a vector $\boldsymbol{k} = (k_1, k_2, \cdots, k_d) \in \mathbb{Z}^{d} \setminus \{\boldsymbol{0}\}$, we define the support of $\boldsymbol{k}$ as $\supp(\boldsymbol{k}) = \{j|  j\in\{1,2,\dots,d\},k_{j} \neq 0\}$. 

    \item The median of a set of complex numbers $\{a_{j}\}_{j=1}^k$ is defined as $${\rm median} \{a_{j}\}_{j=1}^k = {\rm median}\{\Re a_{j}\}_{j=1}^k + i \cdot {\rm median}\{\Im a_{j}\}_{j=1}^k,$$ where ${\rm median}\{\Re a_{j}\}_{j=1}^k$ and ${\rm median}\{\Im a_{j}\}_{j=1}^k$ denote the medians of the real and imaginary parts of the set, respectively.

    \item In this paper, all hidden constants are considered to be absolute constants.

     \item Our code is available at
 https://github.com/Liang-CL-12/sparse-integration.
\end{itemize}

\section{Integration  Error  from  Sparsity of Frequencies} 
In this section, we characterize the integration error of our algorithm in terms of sparsity. The integration error to be established in Theorem \ref{T1} demonstrates that when the sample size  is of order {$n(\log(1/\epsilon))(\log(n/\epsilon))$, we can filter out the $n$ nonzero frequency components with min-bandwidth (as defined in (Eq.~\eqref{BDW})) bounded by $\mathcal{M}$ with the error $\epsilon$. This suggests that the min-bandwidth of the sparse frequency components does not affect the integration error, provided that the sample size is slightly larger than the sparsity. For the remaining part, we achieve an error on the order of $(\log(1/\epsilon))^{2}/\sqrt{M}$ (where $M$ denotes the sample size), which nearly matches the error rate of standard Monte Carlo methods. 

  We first present some preliminary notions and results. For a function $f$, we consider the random integration algorithms with the random variable
$\omega$ of the following form
\begin{equation*}
A_{d,n,w}(f)=\phi_{d,n,w}(f(\boldsymbol{x}_1),f(\boldsymbol{x}_2),\cdots,f(\boldsymbol{x}_n)),
\end{equation*}
where $\{\boldsymbol{x}_{j}\}_{j=1}^{n}\subseteq [0,1]^d$ is a sample set and $\phi_{d,n,w}$ is a map $\mathbb{C}^{n}\rightarrow \mathbb{C}$.

For an accuracy $\epsilon \in (0,1),$ the information complexity of numerical integration in a function space $F$ in the random setting is defined by
$$
  N(\epsilon,F,d):=\inf\Big\{ n\in \mathbb{N} \big| \exists A_{d,n,w}: E(F,A_{d,n,w})\leq \epsilon \Big\},
 $$
where \[ E(F,A_{d,n,w}):=\sup_{\substack{f\in F\\ \|f\|_{F}\leq 1}}\bigg(\mathbb{E}_{\omega}\Big| I_d(f)-A_{d,n,w}(f)\Big|^{2}\bigg)^{1/2}.\]
We say the random algorithms is polynomially tractable, if $N(\epsilon,F,d)$  is polynomial with respect to $d$ and $1/\epsilon$. Furthermore, if $N(\epsilon,F,d)$  is independent of $d$, we say the algorithm is strong polynomially tractable.
For further information on  tractability in various settings, see \cite{Novak1,Novak2,Novak3} by Novak and Woźniakowski.

Next, we need some technical lemmas.  According to Proposition 2.2 in \cite{kunsch2019solvable}, we have the following corollary.

\begin{lemma}\label{Median}
 Let $\mathcal{W}$ be a given function space with semi-norm $\|\cdot\|_{\mathcal{W}}$. If $R_m$ is a random algorithm such
that
 $$
\sup_{\|f\|_{\mathcal{W}} \leq 1} \mathbb{P}\bigg\{\Big|R_m(f)-\operatorname{INT} (f)\Big|>\epsilon\bigg\} \leq \alpha,
$$
 where $0<\epsilon,\alpha<1/2$, then for an odd positive integer $k$, it holds
 $$
\sup _{\|f\|_{\mathcal{W}} \leq 1} \mathbb{P}\Bigg\{\bigg|{\rm median}\Big\{R_{m, j}(f)\Big\}_{j=1}^{k}-\operatorname{INT} (f)\bigg|>2\epsilon\Bigg\} \le  (4\alpha(1-\alpha))^{k/2},$$
where $\{R_{m, j}(f)\}_{j=1}^k$ is a set of $k$ independent realizations of $R_m(f)$.
\end{lemma}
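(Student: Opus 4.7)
The plan is to reduce the complex-valued median trick to two applications of the standard real-valued median trick, one for real parts and one for imaginary parts.

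First I would note the deterministic geometric fact: if both
$|\Re\,\mathrm{median}\{R_{m,j}(f)\}_{j=1}^{k}-\Re\,\mathrm{INT}(f)|\le\epsilon$
and
$|\Im\,\mathrm{median}\{R_{m,j}(f)\}_{j=1}^{k}-\Im\,\mathrm{INT}(f)|\le\epsilon$
hold, then the complex modulus of the difference is at most $\sqrt{2}\,\epsilon<2\epsilon$. Taking contrapositives and applying a union bound,
\begin{equation*}
\mathbb{P}\{|\mathrm{median}\{R_{m,j}(f)\}_{j=1}^{k}-\mathrm{INT}(f)|>2\epsilon\} \le \mathbb{P}(E_{\Re}) + \mathbb{P}(E_{\Im}),
\end{equation*}
where $E_{\Re}$ and $E_{\Im}$ denote the events that the real part median and the imaginary part median, respectively, deviate from the corresponding part of $\mathrm{INT}(f)$ by more than $\epsilon$.

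Next I would handle $\mathbb{P}(E_{\Re})$ by the usual median-trick argument. Since $\Re\,\mathrm{median}$ is the scalar median, it deviates from $\Re\,\mathrm{INT}(f)$ by more than $\epsilon$ only if at least $(k+1)/2$ of the real samples $\Re R_{m,j}(f)$ do. Each such event is contained in $\{|R_{m,j}(f)-\mathrm{INT}(f)|>\epsilon\}$ and therefore has probability at most $\alpha$ by hypothesis. Since the $R_{m,j}(f)$ are independent, the probability of at least $(k+1)/2$ successes in $k$ Bernoulli$(\alpha)$ trials is bounded, via the binomial tail and $\binom{k}{(k+1)/2}\le 2^{k-1}$ for odd $k$, by $2^{k-1}\alpha^{(k+1)/2}$. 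The same bound applies to $\mathbb{P}(E_{\Im})$.

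Finally I would combine: using $\alpha<1/2<1$ to absorb the extra $\alpha^{1/2}$ factor,
\begin{equation*}
\mathbb{P}(E_{\Re})+\mathbb{P}(E_{\Im})\le 2\cdot 2^{k-1}\alpha^{(k+1)/2} = 2^{k}\alpha^{(k+1)/2}\le 2^{k}\alpha^{k/2},
\end{equation*}
which is the claim. There is no serious obstacle here; the only subtle point is the factor $2$ in the tolerance $2\epsilon$, which arises precisely because one must pay $\sqrt{2}$ to pass from the componentwise $\ell^{\infty}$ error to the complex modulus and then absorb the remaining slack. Everything else is the standard Kunsch--Nuyens argument (Proposition 1 of \cite{kunsch2019optimal}) applied twice.
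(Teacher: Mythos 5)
Your proof is correct, and it fills in exactly the argument that the paper leaves implicit: the paper states this lemma with only a citation to Proposition~1 of Kunsch and Rudolf, with no written proof, so there is no ``paper approach'' to diverge from. Your componentwise reduction is the right way to adapt the scalar median trick to the complex-valued median defined in the paper (median of real parts plus $i$ times median of imaginary parts): the contrapositive step $|\Re z|\le\epsilon$ and $|\Im z|\le\epsilon\Rightarrow|z|\le\sqrt{2}\,\epsilon\le 2\epsilon$ is where the factor $2$ in the tolerance comes from, the order-statistics observation that a failing median forces at least $(k+1)/2$ failing samples is sound for odd $k$, the union bound $\mathbb{P}(\mathrm{Bin}(k,\alpha)\ge(k+1)/2)\le\binom{k}{(k+1)/2}\alpha^{(k+1)/2}\le 2^{k-1}\alpha^{(k+1)/2}$ holds, and $2\cdot 2^{k-1}\alpha^{(k+1)/2}=2^k\alpha^{(k+1)/2}\le 2^k\alpha^{k/2}$ since $\alpha<1$. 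A minor note of precision: you could also reach $2^k\alpha^{k/2}$ directly in one shot via the Chernoff-type bound $\mathbb{P}(\mathrm{Bin}(k,\alpha)\ge(k+1)/2)\le(2\sqrt{\alpha(1-\alpha)})^k\le 2^k\alpha^{k/2}$, which is the form usually attributed to Kunsch--Rudolf and Niemiro--Pokarowski, but your binomial union bound gives the same final inequality with no loss in the stated conclusion.
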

Based on Claim 7.2 in \cite{hassanieh2012nearly} (see the full version: arXiv:1201.2501v2), we have the following lemma. In order to ensure the paper's self-sufficiency, we present a proof. 
\begin{lemma}\label{biu} Let $G_{r,l,\infty}:=\sum_{ k\in \mathbb{Z}}\frac{1}{r\sqrt{2\pi}}\exp\left(-\frac{(l+kN)^2}{2r^2}\right).$
For any $w\in \mathbb{Z}_{N}$, it holds 
\begin{equation}\label{l22e10}
\sum_{l\in \mathbb{Z}_N}G_{r,l,\infty} \exp(2 \pi i wl/N)  =\sum_{k\in  \mathbb{Z}} \exp\left(-2 (\pi r (k+w/N))^2\right).
\end{equation}
\end{lemma}
\begin{proof}
 Recall the Fourier transform over $\mathbb{R}$
\begin{equation*}\widehat{f}_{\mathbb{R}}(w)=\int_{-\infty}^{+\infty}f(x)\exp(-2\pi i w  x)dx.
\end{equation*} 
 Let $g_{r}(x)=\frac{1}{r\sqrt{2\pi}}\exp(-\frac{x^2}{2r^2})$.
It is well-known that $\widehat{g_{r}}_{\mathbb{R}}(w)= \exp(-2(\pi r w)^{2}) $ . Let $G_{r,\infty}(x)=\sum_{ k\in \mathbb{Z}}\frac{1}{r\sqrt{2\pi}}\exp(-\frac{(x+kN)^2}{2r^2})$.
 By Poisson's summation formula (see Theorem 2.28 in \cite{plonka2018numerical}),  
 \begin{equation}\label{poss}
G_{r,\infty}(x)=\frac{1}{N}\sum_{w\in \mathbb{Z}}\exp(-2(\pi r w/N)^{2})\exp(2 \pi i w x/N).
\end{equation} 
Note that $$\sum_{l\in \mathbb{Z}_{N}}\exp(2 \pi i w'l/N)\exp(-2 \pi i wl/N)=N \quad for \quad w'= w ({\rm mod} N),\quad w',w\in \mathbb{Z},$$ and $$\sum_{l\in \mathbb{Z}_{N}}\exp(2 \pi i  w' l/N)\exp(-2 \pi i wl/N)=0\quad for \quad w'\neq w ({\rm mod} N),\quad w',w\in \mathbb{Z}.$$
 Multiplying both sides of (Eq.~(\ref{poss})) by $\exp(2\pi i wx/N)$ and  substituting $x=l$, we have 
\begin{equation*}\begin{split}
\sum_{l\in \mathbb{Z}_N}G_{r,l,\infty} \exp(2 \pi i wl/N)  &=\sum_{k\in  \mathbb{Z}} \exp\left(-2 \left(\pi r (k+w/N)\right)^2\right).
\end{split}
\end{equation*}
\end{proof}
\begin{lemma}\label{seeea}
Let $0<\epsilon\le1/10$,  $B$ be an integer greater than 2 , $r=B \sqrt{ \log (1 / \epsilon)}$, and $L=\lceil r\sqrt{2\log (1/\epsilon)}\rceil$. If $N$ is an integer that satisfies $N\ge 3L$ then 
\begin{equation}\label{l23e10}
\left|\sum_{|l|\le L}G_{r,l}\exp(2 \pi i wl/N)\right|\le 10\epsilon \quad~\text{for~any ~}w\notin [-N/B,N/B]\, ({\rm mod}N),
\end{equation}
and 
\begin{equation}\label{l23e11}
\left|1-\sum_{|l|\le L}G_{r,l}\right|\le 10\epsilon .
\end{equation}
 \end{lemma}

\begin{proof}
Given that $ N\ge 3L$ and  $L=\lceil r\sqrt{2\log (1 / \epsilon)}\rceil$, we have for $l\in[-L,L]$, 

 \begin{equation*}
\begin{split}
&|G_{r,l}-G_{r,l,\infty}|\le 2\sum_{k=1}^{+\infty}\frac{1}{r\sqrt{2\pi}}\exp\left(-\frac{\Big(N-|l|+(k-1)N\Big)^2}{2r^2}\right)\\&\le 2\sum_{k=1}^{+\infty}\frac{1}{r\sqrt{2\pi}}\exp\left(-\frac{\Big(N-L+(k-1)N\Big)^2}{2r^2}\right)\le 2\sum_{k=1}^{+\infty}\frac{1}{r\sqrt{2\pi}}\exp\left(-\frac{\Big(2L+(k-1)N\Big)^2}{2r^2}\right)\\
&\le \frac{2}{r\sqrt{2\pi}}\exp\left(-\frac{4L^2}{2r^2}\right) \sum_{k=1}^{+\infty}\exp\left(-\frac{4L(k-1)N}{2r^2}\right) \le \frac{2}{r\sqrt{2\pi}}\exp\left(-\frac{4L^2}{2r^2}\right) \sum_{k=1}^{+\infty}\exp\left(-4(k-1)\right) \\&= \frac{2}{r\sqrt{2\pi}}\exp\left(-\frac{L^2}{2r^2}\right)\exp\left(-\frac{3L^2}{2r^2}\right)\frac{1}{1-\exp(-4)}.  
\end{split}
\end{equation*} 
Since $L/r \ge \sqrt{2\log(1/\epsilon)}$, we have $\exp(-\frac{3L^2}{2r^2})\le \epsilon^2$. It follows that for $l\in[-L,L]$,
\begin{equation*}
\begin{split}
|G_{r,l}-G_{r,l,\infty}|&\le \frac{2}{r\sqrt{2\pi}}\exp(-\frac{L^2}{2r^2})\exp(-\frac{3L^2}{2r^2})\frac{1}{1-\exp(-4)} \\&\le \frac{2}{r\sqrt{2\pi}}\exp(-\frac{L^2}{2r^2}) \frac{\epsilon^{2}}{1-\exp(-4)} \\&\le \frac{\epsilon}{r\sqrt{2\pi}}\exp(-\frac{L^2}{2r^2}).
%
\end{split}
\end{equation*}
Denote the set $\mathbb{Z}_N \backslash([0,L]\cup [N-L,N-1])$ by $M$. Note that

 \begin{equation*}
\begin{split}
\sum_{l\in M}G_{r,l,\infty}&\le 2\sum_{L<l\le \frac{N}{2}}G_{r,l,\infty} \le 4\sum_{L<l\le \frac{N}{2}}\sum_{ k=0}^{+\infty}\frac{1}{r\sqrt{2\pi}}\exp\left(-\frac{(l+kN)^2}{2r^2}\right)\\&\le 4\sum_{L<l\le \frac{N}{2}}\sum_{ k=0}^{+\infty}\frac{1}{r\sqrt{2\pi}}\exp\left(-\frac{l^2+2lkN}{2r^2}\right)\\&\le 4\sum_{L<l\le \frac{N}{2}}\exp\left(-\frac{l^2}{2r^2}\right)\sum_{ k=0}^{+\infty}\frac{1}{r\sqrt{2\pi}}\exp(-\frac{2lkN}{2r^2})\\&\le 4\sum_{L<l\le \frac{N}{2}}\exp\left(-\frac{l^2}{2r^2}\right) \frac{1}{r\sqrt{2\pi}}\frac{1}{1-\exp(-lN/r^2)}\\&\le \frac{4}{r\sqrt{2\pi}}\frac{1}{1-\exp(-LN/r^2)}\sum_{j=1}^{+\infty}\exp\left(-\frac{(L+j)^2}{2r^2}\right) \\&\le \frac{4}{r\sqrt{2\pi}}\frac{1}{1-\exp(-LN/r^2)}\sum_{j=1}^{+\infty}\exp(-\frac{L^2+2jL}{2r^2}) \\&= \frac{4}{r\sqrt{2\pi}}\frac{\exp(-L^2/2r^2)}{1-\exp(-LN/r^2)}\sum_{j=1}^{+\infty}\exp(-\frac{jL}{r^2}) \\&= \frac{4}{r\sqrt{2\pi}}\frac{\exp(-L^2/2r^2)}{1-\exp(-LN/r^2)}\frac{1}{1-\exp(-L/r^2)}. 
\end{split}
\end{equation*} 
Since $L/r \ge \sqrt{2\log(1/\epsilon)}$ and $N>3L$, we have \begin{equation}\label{buchong}
\exp\left(-\frac{L^2}{2r^2}\right)\le \epsilon,\quad \exp\left(-\frac{NL}{r^2}\right)\le \epsilon^2.
\end{equation} Since $L=\lceil r\sqrt{2\log (1 / \epsilon)}\rceil$, $r\ge 2\sqrt{ \log (1 / \epsilon)}$,  and $1-\exp(-x)\ge |x|\exp(-x)$ for $x>0$, we have $$L/r^{2}\le 1\quad \text{and}\quad\frac{1}{1-\exp(-L/r^2)} \le \exp(L/r^2){r^2}/L\le  er^2/L.$$ It follows that
\begin{equation*}
\begin{split}
\sum_{l\in M}G_{r,l,\infty}&\le \frac{4}{r\sqrt{2\pi}}\frac{\exp(-L^2/2r^2)}{1-\exp(-LN/r^2)}\frac{1}{1-\exp(-L/r^2)}\\& \le \frac{4er}{L\sqrt{2\pi}}\frac{\epsilon}{1-\epsilon^2}\le 8\epsilon.
\end{split}
\end{equation*}
Therefore, by employing (Eq.~(\ref{buchong})),  $0<\epsilon \le 1/10$, and  $L=\lceil r\sqrt{2\log (1 / \epsilon)}\rceil$, we derive  that
\begin{equation}\begin{split}\label{l22e1}
&\Big|\sum_{|l|\le L}G_{r,l}\exp(2 \pi i wl/N)-\sum_{l\in \mathbb{Z}_N}G_{r,l,\infty} \exp(2 \pi i wl/N)\Big|\\=&\Big|\sum_{|l|\le L}G_{r,l}\exp(2 \pi i wl/N)-\sum_{|l|\le L}G_{r,l,\infty} \exp(2 \pi i wl/N)-\sum_{l\in M}G_{r,l,\infty}\exp(2 \pi i wl/N)\Big|\\
 \le &  \frac{\epsilon (2L+1)}{r\sqrt{2\pi}}\exp(-\frac{L^2}{2r^2})+8\epsilon \le \frac{5\epsilon^2 \sqrt{\log(1/\epsilon)}}{\sqrt{2\pi}}+8\epsilon\le 9\epsilon .
\end{split}
\end{equation}
Using Lemma \ref{biu} and taking $w=0$ in the (Eq.~\eqref{l22e10}), we have 
\begin{equation}\label{l22e3}\left|\sum_{l\in \mathbb{Z}_N}G_{r,l,\infty} -1\right|=\left|\sum_{k\in  \mathbb{Z}} \exp\left(-2 (\pi r k)^2\right)-1\right|= \left|2\sum_{k=1}^{\infty} \exp\left(-2 (\pi r k)^2\right)\right|\le \epsilon .\end{equation}
Combining (Eq.~\eqref{l22e1}), (Eq.~\eqref{l22e3}), and the triangle inequality yields
\begin{equation*}
\Big|1-\sum_{|l|\le L}G_{r,l}\Big|=\Big|1-\sum_{l\in \mathbb{Z}_N}G_{r,l,\infty}+\sum_{l\in \mathbb{Z}_N}G_{r,l,\infty}-\sum_{|l|\le L}G_{r,l}\Big|\le 10\epsilon .
\end{equation*}
On the other hand, using the (Eq.~\eqref{l22e10}) and considering the condition $r=B \sqrt{ \log (1 / \epsilon)}$, we derive that for  $w\notin [-N/B,N/B] ({\rm mod}\ N)$, 
\begin{equation*}
\begin{split}
&~~~~\left|\sum_{l\in \mathbb{Z}_N}G_{r,l,\infty} \exp(2 \pi i wl/N) \right| =\left|\sum_{k\in  \mathbb{Z}} \exp\left(-2 (\pi r (k+w/N))^2\right)\right|\\
&\le 2\sum_{k=0}^{+\infty} \exp\left(-2 (\pi r (k+1/B))^2\right)\le \epsilon.
\end{split}
\end{equation*}
Combining the above inequality and (Eq.~\eqref{l22e1}), we complete the proof (Eq.~\eqref{l23e10}).
\end{proof} 
The lemma presented below is closely related to Lemma 3.2 in \cite{chen2022note} and Lemma 4 in \cite{kritzer2019lattice}.
\begin{lemma}\label{seeeaa}
Let $N$ be a prime number and $B$ be a positive integer with $N\ge 3B>6$. Then, for any  $\boldsymbol{w}\in \mathbb{Z}^{d}$ with $\boldsymbol{w}\neq 0\, ({\rm mod}\  N)$, it holds
$$\boldsymbol{H}_{N}\cdot \boldsymbol{w}\in[-N/B, N/B]\,({\rm mod} N),$$
with probability at most $4/B$ over the randomness of $\boldsymbol{H}_{N}$, where $\boldsymbol{H}_{N}:=(h_1,h_2,\dots,h_d)$ is drawn from the uniform distribution over
$\mathbb Z^{d}\cap [1,N )^{d}.
$
\end{lemma}
\begin{proof}
Since $\boldsymbol{w}\neq \boldsymbol{0}\,({\rm mod} \ N)$, it must have a non-zero component.
Without loss of generality, let us assume that $w_d\neq 0\, ({\rm mod} \ N)$. Then, fixing arbitrary values $
h_1,\dots, h_{d-1}$, with probability at most $
4/B$ (no greater than  
$\frac{\#\{w\in\mathbb{Z}|w\in[-N/B,N/B]\}}{N-1}$ ) over the randomness of $h_d$,
we have 
$$
\sum_{i=1}^{d}h_{i}w_{i}\in[-N/B, N/B]\,({\rm mod}\  N),
$$
thereby completing the proof.
\end{proof}
Now, we introduce the local Monte Carlo sampling.
For any function $f:[0,1]^d\rightarrow \mathbb{C}$, we introduce a new function $f_{N,\boldsymbol{\eta}}$ defined as
\begin{equation}\label{MCC}
f_{N,\boldsymbol{
\boldsymbol{\eta}}}(\boldsymbol{z}/N):=f(\boldsymbol{z}/N+\boldsymbol{\boldsymbol{\eta}}_{\boldsymbol{z}}),
\end{equation}
where $\boldsymbol{z}\in \mathbb{Z}_{N}^{d}$, $\boldsymbol{\eta}=(\boldsymbol{\eta}_{\boldsymbol{z}})_{\boldsymbol{z}\in\mathbb{Z}^d}$, and each $\boldsymbol{\eta}_{\boldsymbol{z}}$ is independently and identically distributed from the uniform distribution on $[0,1/N)^d$.

 Given a function $f$ with $\|f\|_{L^2([0,1]^d)}\le 1$, we use $a_{\boldsymbol{\xi}}$ to denote the Fourier coefficient of $f$. Then $f$ can be expressed as 
\begin{equation*}
f(\boldsymbol{x})=\sum_{\boldsymbol{\xi}\in \mathbb{Z}^d}a_{\boldsymbol{\xi}}\exp(2\pi i \boldsymbol{\boldsymbol{\xi}}\cdot \boldsymbol{x}).
\end{equation*}
For $\mathcal{M}>0$, let \begin{equation}\label{BDW}
 U\subseteq \{\boldsymbol{w}=(w_1,w_2,\dots,w_d)\in \mathbb{Z}^d \backslash\{\boldsymbol{0}\} | \min_{j\in\supp(\boldsymbol{w})}|w_{j}|\le\mathcal{M}\}  ,\end{equation} and define $g(\boldsymbol{x})$ and $R(\boldsymbol{x})$ as 
\begin{equation}\label{shuoming2}
g(\boldsymbol{x})=\sum_{\boldsymbol{\xi}\in U}a_{\boldsymbol{\xi}}\exp(2\pi i \boldsymbol{\boldsymbol{\xi}}\cdot \boldsymbol{x}),~R(\boldsymbol{x})=\sum_{\boldsymbol{\xi}\notin (U\cup\{\boldsymbol{0}\})}a_{\boldsymbol{\xi}}\exp(2\pi i \boldsymbol{\boldsymbol{\xi}}\cdot \boldsymbol{x}).
\end{equation}
We call $\mathcal{M}$ the min-bandwidth of $g$. 

Then the function $f$ can be rewritten as
\begin{equation}\label{NDf}
f(\boldsymbol{x})=a_{\boldsymbol{0}}+g(\boldsymbol{x})+R(\boldsymbol{x}),\quad \boldsymbol{x}\in [0,1]^d.
\end{equation}
By the definition of $f_{N,\boldsymbol{\eta}}$ in (Eq.~\eqref{MCC}),  we obtain 
$$
f_{N,\boldsymbol{\eta}}(\boldsymbol{z}/N)=a_{\boldsymbol{0}}+g(\boldsymbol{z}/N+\boldsymbol{\boldsymbol{\eta}}_{\boldsymbol{z}})+R(\boldsymbol{z}/N+\boldsymbol{\boldsymbol{\eta}}_{\boldsymbol{z}}).
$$
For convenience, we let  $a_{\boldsymbol{0}}^{*}=\widetilde{f}_{N,\boldsymbol{\eta}}^{(N)}(\boldsymbol{0})=a_{\boldsymbol{0}}+\widetilde{g}_{N,\boldsymbol{\eta}}^{(N)}(\boldsymbol{0})+\widetilde{R}_{N,\boldsymbol{\eta}}^{(N)}(\boldsymbol{0})$, and 
\begin{equation}\label{Rnyt}
R_{2}(\boldsymbol{z}/N)
=R_{N,\boldsymbol{\eta}}(\boldsymbol{z}/N)+g_{N,\boldsymbol{\eta}}(\boldsymbol{z}/N)-g(\boldsymbol{z}/N)-\widetilde{g}_{N,\boldsymbol{\eta}}^{(N)}(\boldsymbol{0})-\widetilde{R}_{N,\boldsymbol{\eta}}^{(N)}(\boldsymbol{0}).
\end{equation}
 Then
\begin{equation}\label{sshuoming}
\widetilde{R_2}^{(N)}(\boldsymbol{0})=\widetilde{R}_{N,\boldsymbol{\eta}}^{(N)}(\boldsymbol{0})+\widetilde{g}_{N,\boldsymbol{\eta}}^{(N)}(\boldsymbol{0})-0-\widetilde{g}_{N,\boldsymbol{\eta}}^{(N)}(\boldsymbol{0})-\widetilde{R}_{N,\boldsymbol{\eta}}^{(N)}(\boldsymbol{0})=0,
\end{equation} 
and $f_{N,\boldsymbol{\eta}}$ can be rewritten as
\begin{equation}\label{fnyt}
f_{N,\boldsymbol{\eta}}(\boldsymbol{z}/N)=a_{\boldsymbol{0}}^*+g(\boldsymbol{z}/N)+R_{2}(\boldsymbol{z}/N).
\end{equation}

\begin{lemma}\label{3lem} Let $f$ and $f_{N,\boldsymbol{\eta}}$ be functions defined in (Eq.~\eqref{NDf}) and (Eq.~\eqref{fnyt}). Let $K=\#U$ and $\|f\|_{L^{2}([0,1]^d)}\le 1$, we have over the randomness of $\boldsymbol{\eta}$ that it holds 
\begin{equation*} 
\mathbb{P}\Big\{|a_{\boldsymbol{0}}^{*}-a_{\boldsymbol{0}}|\le \epsilon\Big\}>1-\frac{1}{\epsilon^{2} N^d},
\end{equation*}
and 
\begin{equation*} 
\mathbb{P}\left\{\frac{1}{N^d}\sum_{\boldsymbol{z}\in \mathbb{Z}_{N}^d}\left|R_{2}(\boldsymbol{z}/N)\right|^2\le \frac{4}{\sigma_2} \|R\|_{L^{2}([0,1]^d)}^{2}+ \Theta\left(\epsilon^2+ \frac{K  }{\sigma_{2} N^{d/2}} \right)\right\}\ge 1-\sigma_2-\frac{1}{\epsilon^{2} N^d}.
\end{equation*}
\end{lemma}
\begin{proof}
Since $\mathbb{E}_{\boldsymbol{\eta}}\widetilde{f}_{N,\boldsymbol{\eta}}^{(N)}(\boldsymbol{0})=a_{\boldsymbol{0}}$ and $\|f\|_{L^{2}([0,1]^d)}\le 1$, we have
\begin{equation}\label{lem25101}
\mathbb{E}_{\boldsymbol{\eta}}\left|\widetilde{f}_{N,\boldsymbol{\eta}}^{(N)}(\boldsymbol{0})-a_{\boldsymbol{0}}\right|^2=\mathbb{E}_{\boldsymbol{\eta}}\left|\frac{1}{N^d}\sum_{\boldsymbol{t}\in \{\boldsymbol{z}/N|z\in \mathbb{Z}_{N}^{d} \}}f_{N,\boldsymbol{\eta}}(\boldsymbol{t})-\frac{1}{N^d}\sum_{\boldsymbol{t}\in \{\boldsymbol{z}/N|z\in \mathbb{Z}_{N}^{d} \}}\mathbb{E}_{\boldsymbol{\eta}}f_{N,\boldsymbol{\eta}}(\boldsymbol{t})\right|^{2}\le \frac{1}{N^d}.\end{equation}
Then by Chebyshev's inequality,
\begin{equation}\label{lem2500}
\mathbb{P}\left\{|a_{\boldsymbol{0}}^{*}-a_{\boldsymbol{0}}|\le \epsilon\right\}=\mathbb{P}\bigg\{\Big|\widetilde{f}_{N,\boldsymbol{\eta}}^{(N)}(\boldsymbol{0})-a_{\boldsymbol{0}}\Big|\le \epsilon\bigg\}=\mathbb{P}\left\{ \left|\widetilde{g}_{N,\boldsymbol{\eta}}^{(N)}(\boldsymbol{0})+\widetilde{R}_{N,\boldsymbol{\eta}}^{(N)}(\boldsymbol{0})\right|\le \epsilon\right\}>1-\frac{1}{\epsilon^{2} N^d}.
\end{equation}
Notice that 
\begin{equation}\label{lem2501add}
R_{2}(\boldsymbol{z}/N)
=R_{N,\boldsymbol{\eta}}(\boldsymbol{z}/N)+\left(g_{N,\boldsymbol{\eta}}(\boldsymbol{z}/N)-g(\boldsymbol{z}/N)\right)-\left(\widetilde{g}_{N,\boldsymbol{\eta}}^{(N)}(\boldsymbol{0})+\widetilde{R}_{N,\boldsymbol{\eta}}^{(N)}(\boldsymbol{0})\right).
\end{equation}
Next, we will estimate the first two terms in (Eq.~\eqref{lem2501add}) respectively. Note that
\begin{equation*}
\begin{split}
\mathbb{E}_{\boldsymbol{\eta}}\frac{1}{N^d}\sum _{\boldsymbol{z}\in \mathbb{Z}_{N}^d}\left|R_{N,\boldsymbol{\eta}}(\boldsymbol{z}/N)\right|^2 =\frac{1}{N^d}\sum _{\boldsymbol{z}\in \mathbb{Z}_{N}^d}\mathbb{E}_{\boldsymbol{\eta_{\boldsymbol{z}}}}\left|R_{N,\boldsymbol{\eta}}(\boldsymbol{z}/N)\right|^2 =\sum _{\boldsymbol{z}\in \mathbb{Z}_{N}^d}
\|R\|_{L^{2}\left(\frac{\boldsymbol{z}}{N}+[0,\frac{1}{N}]^{d}\right)}^2=\|R\|_{L^{2}([0,1]^{d})}.
\end{split}
\end{equation*}
Then for a positive number $\sigma_{2}$, the application of Markov’s inequality yields
\begin{equation}\label{lem2501}
\mathbb{P}\left\{\frac{1}{N^d}\sum _{\boldsymbol{z}\in \mathbb{Z}_{N}^d}\left|R_{N,\boldsymbol{\eta}}(\boldsymbol{z}/N)\right|^2\le \frac{2}{\sigma_{2}}\|R\|_{L^{2}([0,1]^d)}^{2}\right\}>1-\sigma_{2}/2.
\end{equation}
Analogous to (Eq.~\eqref{lem25101}),   we have $\mathbb{E}_{\boldsymbol{\eta}} |\widetilde{g}_{N,\boldsymbol{\eta}}^{(N)}(\boldsymbol{w}) - a_{\boldsymbol{w}}|^2 \leq \frac{  1}{N^d}$ for $\boldsymbol{w}\in U$. Applying Parseval’s identity, we have 
\begin{equation*}
\begin{split}
\mathbb{E}_{\boldsymbol{\eta}} \mathbb{E}_{\boldsymbol{z}}&|g_{N,\boldsymbol{\eta}}(\boldsymbol{z}/N) - g(\boldsymbol{z}/N)|^2 = \mathbb{E}_{\boldsymbol{\eta}}\sum_{\boldsymbol{w}\in U} |\widetilde{g}_{N,\boldsymbol{\eta}}^{(N)}(\boldsymbol{w}) - a_{\boldsymbol{w}}|^2 + \mathbb{E}_{\boldsymbol{\eta}}\sum_{\boldsymbol{w} \in \mathbb{Z}_{N}^{d}\backslash U }|\widetilde{g}_{N,\boldsymbol{\eta}}^{(N)}(\boldsymbol{w})|^2 \\
&\le \frac{ K}{N^d}+\mathbb{E}_{\boldsymbol{\eta}}\sum_{\boldsymbol{w}\in \mathbb{Z}_{N}^{d} }|\widetilde{g}_{N,\boldsymbol{\eta}}^{(N)}(\boldsymbol{w})|^2-\mathbb{E}_{\boldsymbol{\eta}}\sum_{\boldsymbol{w}\in U }|\widetilde{g}_{N,\boldsymbol{\eta}}^{(N)}(\boldsymbol{w})|^2\\
&\le \frac{ K}{N^d}+\mathbb{E}_{\boldsymbol{\eta}}\left(\frac{1}{N^d}\sum_{\boldsymbol{z}\in\mathbb{Z}_{N}^d}|g_{N,\boldsymbol{\eta}}(\boldsymbol{z}/N)|^{2}\right)-\|g\|_{L^{2}([0,1]^d)}+\mathbb{E}_{\boldsymbol{\eta}}\sum_{\boldsymbol{w}\in U }\left||a_{\boldsymbol{w}}|^{2}-|\widetilde{g}_{N,\boldsymbol{\eta}}^{(N)}(\boldsymbol{w})|^2\right|\\
&\le \frac{ K}{N^d}+0+2   \mathbb{E}_{\boldsymbol{\eta}}\sum_{\boldsymbol{w}\in U }\left||a_{\boldsymbol{w}}|-|\widetilde{g}_{N,\boldsymbol{\eta}}^{(N)}(\boldsymbol{w})|\right|\le \frac{ K}{N^d}+2   \mathbb{E}_{\boldsymbol{\eta}}\sum_{\boldsymbol{w}\in U }\left|a_{\boldsymbol{w}}-\widetilde{g}_{N,\boldsymbol{\eta}}^{(N)}(\boldsymbol{w})\right|\le \frac{3K}{N^{d/2}}.
\end{split}
\end{equation*}
 Applying Markov's inequality and the above inequalities , we obtain the following two inequalities (for $\mathbb{E}_{\boldsymbol{\eta}} \mathbb{E}_{\boldsymbol{z}}$ and $\mathbb{E}_{\boldsymbol{\eta}}$, respectively):
\begin{equation}\label{lem250102}
\mathbb{P}\left\{ |g_{N,\boldsymbol{\eta}}(\boldsymbol{z}/N) - g(\boldsymbol{z}/N)|^{2}\le \frac{6K }{\sigma_{2} N^{\frac{d}{2}}}\right\}\ge1-\frac{\sigma_{2}N^{\frac{d}{2}}\mathbb{E}_{\boldsymbol{\eta}} \mathbb{E}_{\boldsymbol{z}}|g_{N,\boldsymbol{\eta}}(\boldsymbol{z}/N) - g(\boldsymbol{z}/N)|^2}{6K}\ge 1-\frac{\sigma_{2}}{2},\end{equation} 
and \begin{equation} \label{lem250102chai}   
\mathbb{P}\left\{ \mathbb{E}_{\boldsymbol{z}} |g_{N,\boldsymbol{\eta}}(\boldsymbol{z}/N) - g(\boldsymbol{z}/N)|^{2}\le \frac{6K }{\sigma_{2} N^{\frac{d}{2}}}\right\}\ge1-\frac{\sigma_{2}N^{\frac{d}{2}}\mathbb{E}_{\boldsymbol{\eta}} \left(\mathbb{E}_{\boldsymbol{z}}|g_{N,\boldsymbol{\eta}}(\boldsymbol{z}/N) - g(\boldsymbol{z}/N)|^2\right)}{6K}\ge 1-\frac{\sigma_{2}}{2}.
\end{equation} 
 By (Eq.~\eqref{lem2500}), (Eq.~\eqref{lem2501}), and (Eq.~\eqref{lem250102chai}),  
\begin{equation}
\begin{split}
&\frac{1}{N^d}\sum_{\boldsymbol{z}\in \mathbb{Z}_{N}^d}\left|R_{2}(\boldsymbol{z}/N)\right|^2\le \frac{2}{N^d}\sum_{\boldsymbol{z}\in \mathbb{Z}_{N}^d}|R_{N,\boldsymbol{\eta}}(\boldsymbol{z}/N)|^2+\frac{4}{N^d}\sum_{\boldsymbol{z}\in \mathbb{Z}_{N}^d}\left|g_{N,\boldsymbol{\eta}}(\boldsymbol{z}/N)-g(\boldsymbol{z}/N)\right|^2 \\&+\frac{4}{N^d}\sum_{\boldsymbol{z}\in \mathbb{Z}_{N}^d}\left|\widetilde{g}_{N,\boldsymbol{\eta}}^{(N)}(\boldsymbol{0})+\widetilde{R}_{N,\boldsymbol{\eta}}^{(N)}(\boldsymbol{0})\right|^2\le \frac{4}{\sigma_{2}}\|R\|_{L^{2}([0,1]^d)}^{2}+\Theta(\epsilon^2)+\frac{24K}{\sigma_{2} N^{d/2}}
\end{split}
\end{equation} 
holds with probability at least $1-\sigma_2-\frac{1}{\epsilon^{2} N^d}$. 
\end{proof}

To establish the theoretical analysis framework for our algorithm, we meticulously design a function space that is comprised of the function $f$ specifically defined in \eqref{NDf}:
\begin{equation}\label{fcsp}
\begin{split}
&F_{K,\mathcal{M},\lambda_{1},\lambda_{2}}:=\bigg\{f(\boldsymbol{x})=a_{\boldsymbol{0}}+g(\boldsymbol{x})+R(\boldsymbol{x}),\boldsymbol{x}\in[0,1]^d\\&\bigg|g(\boldsymbol{x})=\sum_{\boldsymbol{\xi}\in  U}a_{\boldsymbol{\xi}}\exp(2\pi i \boldsymbol{\boldsymbol{\xi}}\cdot \boldsymbol{x}), U\subseteq \{\boldsymbol{w}=(w_1,w_2,\dots,w_d)\in \mathbb{Z}^d \backslash\{\boldsymbol{0}\}  | \min_{j\in\supp(\boldsymbol{w})}|w_{j}|\le\mathcal{M}\}, \\
&
\sum_{\boldsymbol{\xi}\in  U}|a_{\boldsymbol{\xi}}|\le \lambda_{1},\#U=K, R(\boldsymbol{x})=\sum_{\boldsymbol{w}\in \mathbb{Z}^d \backslash (U\cup \{\boldsymbol{0}\}) } b_{\boldsymbol{w}}\exp(2\pi i \boldsymbol{w}\cdot \boldsymbol{x}),\|R\|_{L^{2}([0,1]^d)}^2\le \lambda_{2}\bigg\}.
\end{split}
\end{equation}
It is obvious that $\operatorname{INT}(f) =\hat{f}(\boldsymbol{0}) =a_{\boldsymbol{0}}$ for any $f\in F_{K,\mathcal{M},\lambda_{1},\lambda_{2}}$.
\begin{theorem}\label{T1}Let
$f\in F_{K,\mathcal{M},\lambda_{1},\lambda_{2}} $ with $\|f\|_{L^{2}([0,1]^d)}\le 1$. Given $0<\sigma,\sigma_2\le 1/10$, $n\ge K+1$, $0<\epsilon<\sigma/n$, we take  $B=\lceil   4n/\sigma  \rceil$, $r=B \sqrt{ \log (1/\epsilon)}$, $L=\lceil r\sqrt{2\log (1/\epsilon)}\rceil$, and let $N=\max\{\Theta(1/(\sqrt{\sigma}\epsilon)^{2/d}),\Theta((K/(\sigma_{2}\epsilon^2))^{2/d}),\mathcal{M}+1,3L\}$ be a prime number. Then it holds that over the randomness of $\boldsymbol{H}_{N}$, ${\boldsymbol{z}}$, and $\boldsymbol{\eta}$, 
 \begin{equation}\label{T1E1}
\mathbb{P}\left\{\left|I(f_{N,\boldsymbol{\eta}})_{\boldsymbol{H}_{N},L,r,\boldsymbol{z}}-\operatorname{INT}(f) \right |\le  \Theta\left((\lambda_{1}+1) \epsilon+ \sqrt{\frac{\lambda_{2}\log(1/\epsilon)}{L\sigma \sigma_2}}
\right)\right\}>1-\sigma-\sigma_2.
\end{equation}
Taking the median of outputs $\{I(f_{N,\boldsymbol{\eta}})_{\boldsymbol{H}_{N_j},L,r,\boldsymbol{z}_{j}}\}_{j=1}^{t}$,
which are obtained by independently randomizing $\boldsymbol{H}_{N}$, ${\boldsymbol{z}}$, and $\boldsymbol{\eta}$ for $t$ times, where $t$ is an odd number greater than or equal to $7$, we have  

\begin{equation}\label{T1E2}
\begin{split}
&~~\mathbb{P}\left\{\left|{\rm median} \left\{I(f_{N,\boldsymbol{\eta}_{j}})_{\boldsymbol{H}_{N_j},L,r,\boldsymbol{z}_{j}}\right\}_{j=1}^{t}-\operatorname{INT}(f)\right| \le \Theta\left( (\lambda_{1}+1) \epsilon+ \sqrt{\frac{\lambda_{2}\log(1/\epsilon)}{L\sigma\sigma_2}}\right)\right\}\\ &>1-(2\sqrt{\sigma+\sigma_2})^t.
\end{split}
\end{equation}
Furthermore, the RMSE of the presented integration method is bounded by
\begin{equation}\label{253}
\begin{split}
&\left(\mathbb{E}_{\{\boldsymbol{H}_{N_j},\boldsymbol{z}_{j},\boldsymbol{\eta}_{j}\}_{j=1}^t}\left|{\rm median}\left\{I(f_{N,\boldsymbol{\eta}_{j}})_{\boldsymbol{H}_{N_j},L,r,\boldsymbol{z}_{j}}\right\}_{j=1}^{t}-\operatorname{INT}(f)\right|^{2}\right)^{1/2}\\ &\le \Theta\left((\lambda_{1}+1) \epsilon+ \sqrt{\lambda_{2}\log(1/\epsilon)/(L\sigma \sigma_2)}+(2\sqrt{\sigma+\sigma_2})^t  + \frac{1}{5^{t/2}}\right).~~~~~~~~~~~~~~
\end{split}
\end{equation}
\end{theorem}

\begin{proof}
For any function $f\in F_{K,\mathcal{M},\lambda_{1},\lambda_{2}}$, $f$ can be can be expressed as
\begin{equation}\label{253f}
f(\boldsymbol{x})=a_{\boldsymbol{0}}+g(\boldsymbol{x})+R(\boldsymbol{x}),\end{equation}
with the conditions
$$
\sum_{\boldsymbol{\xi}\in  U}|a_{\boldsymbol{\xi}}|\le \lambda_{1},\operatorname{INT}(R)=0, ~\text{and}~\|R\|_{L^{2}([0,1]^d)}^2\le \lambda_{2}.
$$
As presented in (Eq.~\eqref{fnyt}), we have
\begin{equation}\label{zb}f_{N,\boldsymbol{\eta}}(\boldsymbol{z}/N)=a_{\boldsymbol{0}}^{*}+g(\boldsymbol{z}/N)+R_{2}(\boldsymbol{z}/N), \quad \boldsymbol{z}\in\mathbb{Z}_{N}^d. 
\end{equation}
To bound the integration error, by (Eq.~\eqref{1021}) and that $a_{\boldsymbol{0}}={\rm INT}(f)$, it suffices to estimate the following three terms:
\begin{equation}\label{shilia}
    \left|\sum_{|l|\le L }   a_{\boldsymbol{0}}^{*} G_{r,l}-a_{\boldsymbol{0}}\right|,~\left| \sum_{|l|\le L }g\left(\left\{\frac{\boldsymbol{z}-l\boldsymbol{H}_{N}}{N}\right\}\right) G_{r,l}\right|,~\text{and}~\left|\sum_{|l|\le L}R_{2}\left(\left\{\frac{\boldsymbol{z}-l\boldsymbol{H}_{N}}{N}\right\}\right) G_{r,l}\right|.
\end{equation}
By choosing a prime number 
 $N=\max\{\Theta(1/(\sqrt{\sigma}\epsilon)^{2/d}),\Theta((K/(\sigma_{2}\epsilon^2))^{2/d}),\mathcal{M}+1,3L\}$ and making use of Lemma \ref{3lem}, we have   \begin{equation}\label{aep}
    \mathbb{P}\left\{\left|   a_{\boldsymbol{0}}^{*} -a_{\boldsymbol{0}}\right|\le \Theta(\epsilon)\right\} \ge1-\sigma.\end{equation} 
Moreover, the following inequality holds with probability at least $1-\sigma-\sigma_2$,  
\begin{equation}\label{aiai}
    \frac{1}{N^d}\sum_{\boldsymbol{z}\in \mathbb{Z}_{N}^d}\left|R_{2}(\boldsymbol{z}/N)\right|^{2}=\sum_{\boldsymbol{w}\in \mathbb{Z}_{N}^d} \left| \widetilde{R_2}^{(N)}(\boldsymbol{w})\right|^{2} \le \frac{\|R\|_{L^{2}([0,1]^d)}^{2}}{\sigma_{2}}+ \Theta(\epsilon^2) \le  \frac{4\lambda_{2}}{\sigma_{2}}+ \Theta(\epsilon^2).\end{equation} 
 The application of (Eq.~(\ref{sshuoming})) and (Eq.~(\ref{shuoming2})) (recall the condition $N\ge \mathcal{M}+1$) yields that: 
$$\widetilde{R}_{2}^{(N)}(\boldsymbol{0})=0,~\widetilde{g}^{(N)}(\boldsymbol{0})=0,~\text{and}~\widetilde{f}_{N,\boldsymbol{\eta}}^{(N)}(\boldsymbol{0})=a_{\boldsymbol{0}}^{*}+\widetilde{g}^{(N)}(\boldsymbol{0})+\widetilde{R}_{2}^{(N)}(\boldsymbol{0})=a_{\boldsymbol{0}}^{*}.
$$
From Lemma \ref{seeea} and (Eq.~(\ref{aep})), we deduce
\begin{equation}\label{fanlei}
\mathbb{P}\left\{\left|\sum_{|l|\le L }   a_{\boldsymbol{0}}^{*} G_{r,l}-a_{\boldsymbol{0}}\right|\le \Theta(\epsilon)\right\} \ge1-\sigma.\end{equation}
We will analyze the last two terms  of (Eq.~\eqref{shilia}) separately. Let $n\ge K+1$, $B= \lceil 4n/\sigma \rceil$, $r=B \sqrt{\log (1 / \epsilon)}$, and $L=r\sqrt{2\log (1 / \epsilon)}$. By Lemma \ref{seeeaa}, for $\boldsymbol{w}^{(j)}\in U$, 
$$
\mathbb{P}\left\{ \boldsymbol{w}^{(j)}\in U_{\boldsymbol{H}_{N},B}\right\}\le\sigma/n,~1 \le j\le K,
$$ 
which leads to 
$$ 
\mathbb{P}\left\{\boldsymbol{w}^{(j)}\notin U_{\boldsymbol{H}_{N},B}~\text{holds}~ \text{for}~\text{every}~ j=1,2,\dots,K\right\}\ge 1-\sigma.
 $$
Furthermore, by Lemma \ref{seeea}, it holds
\begin{equation}\label{zy1}
  \mathbb{P}\left\{\sum_{|l|\le L }  \left| g\left(\left\{\frac{\boldsymbol{z}-l\boldsymbol{H}_{N}}{N}\right\}\right) G_{r,l}\right|\le 10\lambda_{1}\epsilon~\text{holds}~ \text{for}~\text{every}~ \boldsymbol{z}\in\mathbb{Z}_{N}^d\right\}\ge 1-\sigma.
\end{equation}
Next, we analyze $R_{2}$.
For each $\boldsymbol{\xi} \in \mathbb{Z}_{N}^{d} \backslash\{\boldsymbol{0}\}$, we let $\widetilde{R_2}^{(N)}(\boldsymbol{\xi})=c_{\boldsymbol{\xi}}$ and denote
\begin{equation}\label{tdingyi}
 t_{\boldsymbol{\xi}}:=\frac{1}{N^d}\sum_{\boldsymbol{z}\in\mathbb{Z}_{N}^d }\left| \sum_{|l|\le L}\exp \left(2\pi i \boldsymbol{\xi}\cdot\left\{\frac{\boldsymbol{z}-l\boldsymbol{H}_{N}}{N}\right\}\right) G_{r,l}\right|^{2}. \end{equation}
The application of Lemma \ref{seeea} leads to $\sum_{ |l|\le L }G_{r,l} \le 1+10\epsilon\le 2$. Therefore,  
\begin{equation} \label{unb0}
t_{\boldsymbol{\xi}}\le \frac{1}{N^d}\sum_{\boldsymbol{Z}\in \mathbb{Z}_{N}^{d} }\Big| \sum_{|l|\le L}G_{r,l}\Big|^{2}\le \Theta(1).
\end{equation} 
In addition, by using  Lemma \ref{seeeaa}, we derive 
\begin{equation} \label{unb}  
\mathbb{P}\Big\{\boldsymbol{\xi}\in U_{\boldsymbol{H}_N,B} \Big\}\le \Theta(1/B).
\end{equation}   
The combination of (Eq.~\eqref{unb0}) and (Eq.~\eqref{unb}) yields
$$\mathbb{E}_{\boldsymbol{H}_{N}}\Big(t_{\boldsymbol{\xi}}|\boldsymbol{\xi}\in U_{\boldsymbol{H}_N,B}\Big)\mathbb{P}\Big\{\boldsymbol{\xi}\in U_{\boldsymbol{H}_N,B} \Big\}\le \Theta\bigg(\frac{1}{B}\bigg).$$
On the other hand, by using (Eq.~(\ref{l23e10})), we have 
\begin{equation}\label{whm}  
\mathbb{E}_{\boldsymbol{H}_{N}}\big(t_{\boldsymbol{\xi}}|\boldsymbol{\xi}\notin U_{\boldsymbol{H}_N,B}\big)\le \Theta(\epsilon^2 ).
\end{equation}
Therefore, it follows that for $~\boldsymbol{H}_{N}{\sim} \operatorname{unif}\mathbb{Z}^{d}\cap [1,N)^{d}$,
\begin{equation}\label{tere}
\begin{split}
\mathbb{E}_{\boldsymbol{H}_{N}}t_{\boldsymbol{\xi}}&= \mathbb{E}_{\boldsymbol{H}_{N}}\big(t_{\boldsymbol{\xi}}|\boldsymbol{\xi}\in U_{\boldsymbol{H}_N,B}\big)\mathbb{P}\left\{\boldsymbol{\xi}\in U_{\boldsymbol{H}_N,B} \right\}+ \mathbb{E}_{\boldsymbol{H}_{N}}\big(t_{\boldsymbol{\xi}}|\boldsymbol{\xi}\notin U_{\boldsymbol{H}_N,B}\big)\mathbb{P}\left\{\boldsymbol{\xi}\notin U_{\boldsymbol{H}_N,B}\right\}\\&\le \Theta(1/B+\epsilon^2 ),
\end{split}
\end{equation}
Upon observing that for every distinct $\boldsymbol{\xi},\boldsymbol{\eta}\in \mathbb{Z}_{N}^d$, 
\begin{equation}\label{zj}
\begin{split}
\sum_{\boldsymbol{z}\in\mathbb{Z}_{N}^d }\left( \sum_{|l|\le L}\exp \left(2\pi i\boldsymbol{\xi}\cdot\left\{\frac{\boldsymbol{z}-l\boldsymbol{H}_{N}}{N}\right\}\right) G_{r,l}\right)\left( \sum_{|l|\le L}\exp \left(-2\pi i\boldsymbol{\eta}\cdot\left\{\frac{\boldsymbol{z}-l\boldsymbol{H}_{N}}{N}\right\}\right) G_{r,l}\right)=0,
\end{split}
\end{equation}
and $\widetilde{R_2}^{(N)}(\boldsymbol{\xi})=c_{\boldsymbol{\xi}}$, by combining (Eq.~\eqref{aiai}) and (Eq.~\eqref{tere}), we immediately arrive at the conclusion that with probability at least $1-\sigma-\sigma_2$ over the randomness of $\boldsymbol{\eta}$, 
\begin{equation}\label{simil} 
\begin{split}
\mathbb{E}_{\boldsymbol{H}_{N}}\mathbb{E}_{\boldsymbol{z}}\bigg| \sum_{|l|\le L}R_{2}\Big(\left\{\frac{\boldsymbol{z}-l\boldsymbol{H}_{N}}{N}\right\}\Big) G_{r,l}\bigg|^{2}&=\mathbb{E}_{\boldsymbol{H}_{N}}\sum_{\boldsymbol{\xi}\in  \mathbb{Z}_{N}^d\backslash\{0\} }\left| \widetilde{R_2}^{(N)}(\boldsymbol{\xi})\right|^{2}t_{\boldsymbol{\xi}}\\&\le   \Theta\left(\left(\frac{1}{B}+\epsilon^2\right)\left(\frac{\lambda_2}{\sigma_2}+\epsilon^2\right)\right) ,
\end{split}
\end{equation}  
 where $\boldsymbol{z}\sim\mbox{unif}\ \mathbb{Z}_{N}^d$. 
 
Recalling the conditions $B= \lceil 4n/\sigma \rceil$ and $\epsilon<1/n$, we obtain the following inequality by Markov's inequality, 
\begin{equation}\label{zy2}
\mathbb{P}\left\{ \left|\sum_{|l|\le L}R_{2}\left(\left\{\frac{\boldsymbol{z}-l\boldsymbol{H}_{N}}{N}\right\}\right) G_{r,l}\right|^{2}  \le \Theta\left(\frac{\lambda_{2}}{n \sigma_{2} } + \frac{\epsilon^2}{n} \right)\right\}\ge (1-\sigma)(1-\sigma-\sigma_2)\ge 1-2\sigma- \sigma_2,
\end{equation} 
where the probability is determined by the random variables $\boldsymbol{z}$, $H_N$, and $\boldsymbol{\eta}$.
Summarizing (Eq.~\eqref{zb}), (Eq.~\eqref{fanlei}), (Eq.~\eqref{zy1}), and (Eq.~\eqref{zy2}), and applying Markov's inequality, we obtain
$$\mathbb{P}\left\{ \left|\sum_{|l|\le L}f\left(\left\{\frac{\boldsymbol{z}-l\boldsymbol{H}_{N}}{N}\right\}\right) G_{r,l}-a_{\boldsymbol{0}}\right|\le \Theta( (\lambda_{1}+1) \epsilon+\sqrt{\lambda_{2}/(n  \sigma_{2})})\right\}\ge 1-4\sigma-\sigma_{2}.$$ 
Replacing $\sigma$ by  $\sigma/4$, we complete the proof of (Eq.~(\ref{T1E1})).

By randomly selecting $\boldsymbol{H}_{N}$, ${\boldsymbol{z}}$, and $\boldsymbol{\eta}$ to compute $I(f_{N,\boldsymbol{\eta}})_{\boldsymbol{H}_{N},L,r,\boldsymbol{z}}$ $t$ times, we obtain $t$ different results. Extracting the median of the obtained results, the estimation (\ref{T1E2}) can be naturally derived from Lemma \ref{Median}. Denote median$\{I(f_{N,\boldsymbol{\eta}_{j}})_{\boldsymbol{H}_{N_j},L,r,\boldsymbol{z}_{j}}\}_{j=1}^{t}$ by $Y$. Then
\begin{equation}
\begin{split}
&~~~~\mathbb{E}_{\{\boldsymbol{H}_{N_j},\boldsymbol{z}_{j},\boldsymbol{\eta}_{j}\}_{j=1}^t}\big|{\rm median}\{I(f_{N,\boldsymbol{\eta}_{j}})_{\boldsymbol{H}_{N_j},L,r,\boldsymbol{z}_{j}}\}_{j=1}^{t}-\operatorname{INT}(f)\big|^{2}\\
&\le\sum_{k\ge 0}\mathbb{P}\Big\{200k\le |Y|\le 200(k+1)\Big\}\mathbb{E}_{\{\boldsymbol{H}_{N_j},\boldsymbol{z}_{j},\boldsymbol{\eta}_{j}\}_{j=1}^t}\Big(\big|Y-\operatorname{INT}(f)\big|^2\Big|200k\le |Y|
\le 200(k+1)\bigg)\\&=:\sum_{k\ge 0}E_{k}.
\end{split}
\end{equation}
Combining $|\operatorname{INT}(f)|\le 1$ and (Eq.~(\ref{T1E2})), we obtain 
$$E_{0}\le \left( \Theta\left( (\lambda_{1}+1) \epsilon+ \sqrt{\lambda_{2}/(n  \sigma_{2})}\right)\left(1-(2\sqrt{\sigma+\sigma_{2}})^t\right)+(200+1)(2\sqrt{\sigma+\sigma_{2}})^{t}\right)^2.
$$
Notice
\begin{equation*}\begin{split}
&\mathbb{E}_{\boldsymbol{\eta}}\mathbb{E}_{\boldsymbol{z}}\left|I(f_{N,\boldsymbol{\eta}})_{\boldsymbol{H}_{N},L,r,\boldsymbol{z}}\right|\le \sum_{|l|\le L}G_{r,l}\mathbb{E}_{\boldsymbol{\eta}}\mathbb{E}_{\boldsymbol{z}} \left|f_{N,\boldsymbol{\eta}}\left(\left\{\frac{\boldsymbol{z}-l\boldsymbol{H}_{N}}{N}\right\}\right)\right|\\=&\Big(\sum_{|l|\le L}G_{r,l}\Big)\mathbb{E}_{\boldsymbol{\eta}}\mathbb{E}_{\boldsymbol{z}} \Big|f_{N,\boldsymbol{\eta}}(\boldsymbol{z}/N)\Big| =\big(\sum_{|l|\le L}G_{r,l}\big)
\|f\|_{L^1([0,1]^d)}
\le 2 \|f\|_{L^2\big([0,1]^d\big)}\le 2.
\end{split}
\end{equation*}
Thus, for $k\ge 1$, using Markov's inequality, we have $$\mathbb{P}\{ |I(f_{N,\boldsymbol{\eta}})_{\boldsymbol{H}_{N},L,r,\boldsymbol{z}}|\ge 200k\}\le \frac{1}{100k},$$
which follows that
$$\mathbb{P}\{ |Y|\ge 200k\}\le \binom{t}{\frac{t+1}{2}}\bigg(\frac{1}{100k}\bigg)^{\frac{t+1}{2}}\le \left(\frac{1}{5}\right)^{t}/k^{\frac{t+1}{2}}.$$
Therefore, 
$$E_{k}\le \big(200(k+2)\big)^{2}\left(\frac{1}{5}\right)^{t}/k^{\frac{t+1}{2}}, \quad \text{for}\quad k\ge1.$$
Given that $t\ge 7$, there exists an absolute constant $C$ such that $$\sum_{k\ge 1} (k+2)^2/k^{\frac{t+1}{2}}\le C.$$ 
Consequently, $$\sum_{k\ge 1}E_{k}\le \Theta \left(\frac{1}{5^t}\right),$$
and 
\begin{equation}
\begin{split}
&~~~~\left(\mathbb{E}_{\{\boldsymbol{H}_{N_j},\boldsymbol{z}_{j},\boldsymbol{\eta}_{j}\}_{j=1}^t}\Big|{\rm median}\big\{I(f_{N,\boldsymbol{\eta}})_{\boldsymbol{H}_{N_j},L,r,\boldsymbol{z}_{j}}\big\}_{j=1}^{t}-\operatorname{INT}(f)\Big|^{2}\right)^{1/2}\\&\le \left(\sum_{k\ge 0}E_{k}\right)^{1/2}\le \Theta\left( (\lambda_{1}+1) \epsilon+ \sqrt{\lambda_{2}/(n\sigma_2)}+(2\sqrt{\sigma+\sigma_2})^t+ \frac{1}{5^{t/2}} \right).
\end{split}
\end{equation}
This completes the proof.
\end{proof} 
\section{Integration in Isotropic Sobolev Spaces}
We are now prepared to develop an estimation for the periodic isotropic Sobolev spaces of order $s\ge0 $.
$$H^{s}(\mathcal{T}^d):=\left\{ f \in L^{2}(\mathcal{T}^d) \Bigg|\|f\|_{H^{s}(\mathcal{T}^d)}:=\sum_{\boldsymbol{w}=(w_{1},\dots,w_{d})\in \mathbb{Z}^d}\Big(1+ \big(\sum_{j=1}^{d}|2\pi w_{j}|\big)^{s}\Big)^2|\hat{f}(\boldsymbol{w})|^2< \infty\right\}.$$
\begin{theorem}\label{T2} Let $f\in H^{s}(\mathcal{T}^d)$ with $\|f\|_{H^{s}(\mathcal{T}^d)}\le 1$. Let $0<\sigma \le 1/10$, $n\ge 1$, $\epsilon<\sigma/n$, $B=\lceil4n/\sigma\rceil$, $r=B \sqrt{\log (1 / \epsilon)}$, $L=\lceil r\sqrt{2\log (1/\epsilon)}\rceil$ and $N=\max\{\Theta(1/(\sqrt{\sigma}\epsilon)^{2/d}),\Theta((n/\epsilon^2)^{2/d}),3L\}$ be a prime number. It holds with probability at least $1-\sigma-1/10$ over the randomness of $\boldsymbol{H}_{N}$, ${\boldsymbol{z}}$, and $\boldsymbol{\eta}$ that
\begin{equation}\label{T21}\left|I(f_{N,\boldsymbol{\eta}})_{\boldsymbol{H}_{N},L,r,\boldsymbol{z}}- \operatorname{INT}(f) \right |\le \Theta\left(\frac{\left(\log(1/\epsilon)\right)^{s/d+1/2}}{(L\sigma)^{s/d+1/2}}+\sqrt{L}\epsilon \right).\end{equation} 
Let ${\rm median}\{I(f_{N,\boldsymbol{\eta}_{j}})_{\boldsymbol{H}_{N_j},L,r,\boldsymbol{z}_{j}}\}_{j=1}^{t}$ be defined as in Theorem \ref{T1}. It holds with probability at least $1-(2\sqrt{\sigma+1/10})^t$ that
\begin{equation}\label{T22}\left|{\rm median}\left\{I(f_{N,\boldsymbol{\eta}_{j}})_{\boldsymbol{H}_{N_j},L,r,\boldsymbol{z}_{j}}\right\}_{j=1}^{t}-\operatorname{INT}(f)\right| \le \Theta\left(\frac{\left(\log(1/\epsilon)\right)^{s/d+1/2}}{(L\sigma)^{s/d+1/2}}+\sqrt{L}\epsilon\right). \end{equation}
Furthermore, the RMSE is bounded by
\begin{equation}\label{T23}
\begin{split}
\left(\mathbb{E}_{\{\boldsymbol{H}_{N_j},\boldsymbol{z}_{j},\boldsymbol{\eta}_{j}\}_{j=1}^t}\left|{\rm median}\left\{I(f_{N,\boldsymbol{\eta}})_{\boldsymbol{H}_{N_j},L,r,\boldsymbol{z}_{j}}\right\}_{j=1}^{t}-\operatorname{INT}(f)\right|^{2}\right)^{1/2} \\\le \Theta\left(\frac{\left(\log(1/\epsilon)\right)^{s/d+1/2}}{(L\sigma)^{s/d+1/2}}+\sqrt{L}\epsilon+  (2\sqrt{\sigma+1/10})^t\right).~~
\end{split}
\end{equation}
\end{theorem}
\begin{remark}\label{28888}
We use $M:=(2L+1)t$ to represent the total sample size employed by our algorithm. 
Substituting $\sigma=1/10$, $t=\log_{\sqrt{5}/2}(n^{s/d+1/2})$, and $\epsilon=1/n^{s/d+3/2}$ into (Eq.~(\ref{T23})), the RMSE error is bounded above by
\begin{equation}\label{sp1}
\Theta\left( (3+4s/d)^{4s/d+3} (\log M)^{{2s/d+1}} /M^{s/d+1/2}\right).
\end{equation} 
When the RMSE error is equal to $\epsilon$, it follows that the sample size $M$ is polynomial with respect to $1/\epsilon$. This demonstrates that the random algorithm has strong polynomial tractability.
The upper bound in (Eq.~(\ref{sp1})) (up to an absolute constant factor) also applies to integration in
the isotropic Sobolev space with compact support (see Corollary \ref{corollv} and Remark \ref{2112233}).
In contrast to \cite{krieg2017universal,ullrich2017Monte}, where the upper bounds for integration algorithms in this space are at least $2^{\Theta(d)}M^{-s/d-1/2}$, our bound outperforms these results when 
 $s\le \Theta(d)$ and $M\le 2^{2^{\Theta(d)}}$ (i.e., $\log^{\Theta(1)} M \le 2^{\Theta(d)}$). Additionally, in the absence of prior knowledge of the order $s$, setting $\epsilon=1/(n^{\log n})$ and $t=\log_{\sqrt{5}/2} (n^ {\log n})$ yields an algorithm universally applicable to all $s$.
\end{remark}
\begin{proof}
Let $n$ be a positive integer, and define $\beta_{n,d}=\frac{n^{1/d}}{2\pi}$. Decompose any function $f\in H^{s}(\mathcal{T}^d)$ as
$$f(x)=a_{\boldsymbol{0}}+g(\boldsymbol{x}) +R(\boldsymbol{x}),$$
where $$g(\boldsymbol{x})=\sum_{\boldsymbol{w}\in  \mathbb{Z}^{d}\cap(-\beta_{n,d},\beta_{n,d})^{d} \backslash\{\boldsymbol{0}\}  }a_{\boldsymbol{w}}\exp(2\pi i \boldsymbol{w}\cdot \boldsymbol{x}),$$
and $$R(\boldsymbol{x})=\sum_{\boldsymbol{w} \notin  \mathbb{Z}^{d}\cap(-\beta_{n,d},\beta_{n,d})^d }a_{\boldsymbol{w}}\exp(2\pi i \boldsymbol{w}\cdot \boldsymbol{x}).$$
Since $f\in H^{s}(\mathcal{T}^d)$,
it follows that $\|R\|_{L^{2}([0,1]^d)}\le \Theta(n^{-\frac{s}{d}}\|f\|_{ H^{s}})$. Additionally, 
$$W:=\#\Big\{\boldsymbol{w}\big| \boldsymbol{w}\in  \mathbb{Z}^{d}\cap(-\beta_{n,d},\beta_{n,d})^d \backslash\{\boldsymbol{0}\} \Big\}\le n-1.$$ 
By the Cauchy-Schwarz inequality, $$\sum_{\boldsymbol{w}\in W}|a_{\boldsymbol{w}}|\le \sqrt{n-1}(\sum_{\boldsymbol{w}\in W}|a_{\boldsymbol{w}}|^{2})^{1/2}\le\sqrt{n-1}.$$
Let $B= 4n/\sigma$. Theorem \ref{T2} follows immediately by letting $\sigma_2=1/10$ and $\mathcal{M}=\beta_{n,d}$ in Theorem \ref{T1}.
\end{proof}

Given a bounded measurable set $\Omega$ with volume $1$,  we denote the isotropic Sobolev spaces with compact support in $\Omega$ by $\mathring{H}^{s}(\Omega)$ (see the definition on page 1189 in \cite{ullrich2017Monte} for details). Specifically,
$$\mathring{H}^{s}(\Omega):=\left\{f\in \mathring{L}^{2}(\Omega)\}|D^{\alpha}f\in L^{2}(\mathbb{R}^d) \quad \text{for}\quad \alpha\in\mathbb{N}_{0}^d \quad \text{and}\quad|\alpha|_{1}\le s\right\}$$
equipped with the norm 
$$\|f\|_{\mathring{H}^{s}(\Omega)}:=\|f\|_{L^{2}(\Omega)}+\sum_{j=1}^{d}\left\|\frac{\partial^{s}f}{\partial x_{j}^{s}}\right\|_{L^2(\Omega)},$$
where $\mathring{L}^{2}(\Omega):=\left\{f\in L^{2}(\mathbb{R}^{d})|\supp(f)\subseteq\Omega\right\}$, $D^{\alpha}f$ denotes the $\alpha$-order weak partial derivative of a function $f$. 

For any $f\in\mathring{H}^{s}(\Omega)$, define the function
$$F(\boldsymbol{x})=\sum_{\boldsymbol{k}\in \mathbb{Z}^d}f(\boldsymbol{k}+\boldsymbol{x}).$$
It holds
$$\int_{\Omega}  f(\boldsymbol{x})d\boldsymbol{x}=\int_{ \mathcal{T}^d} F(\boldsymbol{x})d\boldsymbol{x},$$ and $$ \int_{\Omega} |D^{\alpha}f(\boldsymbol{x})|^{2}d\boldsymbol{x}=\int_{ \mathcal{T}^d}|D^{\alpha}F(\boldsymbol{x})|^{2}d\boldsymbol{x} \quad \text{for}\quad \text{every}\quad \alpha\in\mathbb{N}_{0}^{d}.$$
By the Parseval identity and Minkowski inequality, we have
 $$\|F\|_{H^{s}(\mathcal{T}^d)}\le\|f\|_{\mathring{H}^{s}(\Omega)}. $$ 
Consequently, the error bounds (\ref{T21}), (\ref{T22}), and (\ref{T23}) derived earlier remain valid for the integral $\operatorname{INT}(F)=\int_{\Omega} f(\boldsymbol{x})d\boldsymbol{x}$.

A relevant question concerns the number of samples required to determine the value of
$F(\boldsymbol{x})$. More precisely, based on (Eq.~(\ref{1021})), how many samples are needed to obtain the values of the set $\left\{ F_{N,\boldsymbol{\eta}}\left(\left\{\frac{\boldsymbol{z}-l\boldsymbol{H}_{N}}{N}\right\}\right)\right\}_{l=-L}^{L}$?  Define $$
\mathbb{J}^{\Omega}(\boldsymbol{x}):=\sum_{\boldsymbol{k}\in \mathbb{Z}^d}\mathbb{I}_{\Omega}(\boldsymbol{k}+\boldsymbol{x}),
$$
 With a certain probability, only $\Theta(L)$ samples are required. In fact,
\begin{equation*}
\begin{split}
~~~~~~2L+1=(2L+1)\mbox{volume}(\Omega)=(2L+1)\int_{\Omega}\mathbb{I}_{\Omega}(\boldsymbol{x})d\boldsymbol{x}~~~~~~~~~~~~~~~~~~~~~~~~~~~~~~~~
\\=(2L+1)\int_{[0,1]^d}\mathbb{J}^{\Omega}(\boldsymbol{x})d\boldsymbol{x}=\sum_{l=-L}^{L}\mathbb{E}_{\boldsymbol{\eta} }\mathbb{E}_{\boldsymbol{z}}\mathbb{J}_{N,\boldsymbol{\eta}}^{\Omega}\left(\frac{\boldsymbol{z}-l\boldsymbol{H}_{N}}{N}\right)~~~~~~~~~~~~~~~~~~~~~~~~~\\
=\mathbb{E}_{\boldsymbol{\eta} }\mathbb{E}_{\boldsymbol{z}}\sum_{l=-L}^{L}\#\left\{\boldsymbol{k}\big|\boldsymbol{k}\in \mathbb{Z}^d,  (\boldsymbol{z}-l\boldsymbol{H}_{N})/N+\boldsymbol{\eta}_{z}+\boldsymbol{k}\in\Omega\right\},~~~~~~~~~~~~~~~~~~~~~~~~~
\end{split}
\end{equation*}
where $\boldsymbol{z}{\sim} \operatorname{unif}\mathbb Z_{N}^{d} $, $-L\le l\le L$, and $\boldsymbol{\eta}_{\boldsymbol{z}}\stackrel{i i d}{\sim} \operatorname{unif}\,[0,1/N)^{d}$ (see (Eq.~\eqref{MCC})). By Markov's inequality,
$$
\mathbb{P}\left\{\sum_{l=-L}^{L}\#\left\{\boldsymbol{k}|\boldsymbol{k}\in \mathbb{Z}^d,  (\boldsymbol{z}-l\boldsymbol{H}_{N})/N+\boldsymbol{\eta}_{\boldsymbol{z}}+k\in\Omega\right\}\ge200L+100\right\}\le 1/100.
$$
In other words, with a probability of at least $99/100$, 
the number of samples required to obtain $\left\{ F_{N,\boldsymbol{\eta}}\left(\left\{\frac{\boldsymbol{z}-l\boldsymbol{H}_{N}}{N}\right\}\right)\right\}_{l=-L}^{L}$ is at most $200L+100$.

When searching the sets 
$$
V_l:=\Big\{(\boldsymbol{z}-l\boldsymbol{H}_{N})/N+\boldsymbol{\eta}_{\boldsymbol{z}}+\boldsymbol{k}|\boldsymbol{k}\in \mathbb{Z}^d,  (\boldsymbol{z}-l\boldsymbol{H}_{N})/N+\boldsymbol{\eta}_{\boldsymbol{z}}+\boldsymbol{k}\in\Omega\Big\},-L\le l\le L,
$$
no samples of the function $f$ are required. If the number of elements in the union $\bigcup_{-L\le l\le L} V_l $ exceeds $200L+100$, we promptly output the result (denoted as $I^{*}(F_{N,\boldsymbol{\eta}})_{\boldsymbol{H}_{N},L,r,\boldsymbol{z}}$) as zero. Otherwise, set $I^{*}(F_{N,\boldsymbol{\eta}})_{\boldsymbol{H}_{N},L,r,\boldsymbol{z}}=I(F_{N,\boldsymbol{\eta}})_{\boldsymbol{H}_{N},L,r,\boldsymbol{z}}$. 

This ensures that the sample complexity of our algorithm remains polynomial. Consequently, we have the following corollary.

\begin{corollary}\label{corollv}
Let $f\in\mathring{H}^{s}(\Omega)$ with $\|f\|_{\mathring{H}^{s}(\Omega)}\le 1$. Let $F(\boldsymbol{x})=\sum_{\boldsymbol{k}\in \mathbb{Z}^d}f(\boldsymbol{k}+\boldsymbol{x})$. Assume the same parameter settings as those in Theorem \ref{T2}. Then it holds with probability at least  $1-\sigma-1/10-1/100$ over the randomness of $\boldsymbol{H}_{N}$, ${\boldsymbol{z}}$, and $\boldsymbol{\eta}$ that
\begin{equation*}\bigg|I^{*}(F_{N,\boldsymbol{\eta}})_{\boldsymbol{H}_{N},L,r,\boldsymbol{z}}- \operatorname{INT}(F) \bigg |\le \Theta\left(\frac{1}{(L\delta/\log(1/\epsilon))^{s/d+1/2}}+\sqrt{L}\epsilon \right).\end{equation*} 
By obtaining  ${\rm median}\{I^{*}(F_{N,\boldsymbol{\eta}_{j}})_{\boldsymbol{H}_{N_j},L,r,\boldsymbol{z}_{j}}\}_{j=1}^{t}$ as in Theorem \ref{T1}, we have with probability at least $1-(2\sqrt{\sigma+1/10+1/100})^t$ that
\begin{equation*}\big|{\rm median}\{I^{*}(F_{N,\boldsymbol{\eta}_{j}})_{\boldsymbol{H}_{N_j},L,r,\boldsymbol{z}_{j}}\}_{j=1}^{t}-\operatorname{INT}(F)\big| \le \Theta\left(\frac{1}{(L\delta/\log(1/\epsilon))^{s/d+1/2}}+\sqrt{L}\epsilon\right).\end{equation*}
Furthermore, the RMSE is bounded by
\begin{equation*}
\begin{split}
&\left(\mathbb{E}_{\{\boldsymbol{H}_{N_j},\boldsymbol{z}_{j},\boldsymbol{\eta}_{j}\}_{j=1}^t}\big|{\rm median}\{I^{*}(F_{N,\boldsymbol{\eta}})_{\boldsymbol{H}_{N_j},L,r,\boldsymbol{z}_{j}}\}_{j=1}^{t}-\operatorname{INT}(F)\big|^{2}\right)^{1/2} \\&\le \Theta\left(\frac{1}{(L\delta/\log(1/\epsilon))^{s/d+1/2}}+\sqrt{L}\epsilon+  (2\sqrt{\sigma+1/10+1/100})^t\right).~~~~~~~~~~~~~~~~
\end{split}
\end{equation*}
\end{corollary}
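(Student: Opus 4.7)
The plan is to derive this corollary as a direct consequence of Theorem \ref{T2} applied to the periodization $F$, combined with the sample-count bound from Markov's inequality already established in the preceding discussion.

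First I would verify that the hypotheses of Theorem \ref{T2} apply to $F$. Since $f$ is compactly supported in $\Omega$ with $|\Omega|=1$, the periodization $F(x)=\sum_{k\in\mathbb{Z}^d} f(k+x)$ lies in $H^{s}(\mathcal{T}^d)$ with $\|F\|_{H^{s}(\mathcal{T}^d)}\le 1$, and $\operatorname{INT}(F)=\int_{\Omega}f(x)dx$. Theorem \ref{T2} therefore applies directly to $F$, yielding the desired error bound for $I(F_{N,\eta})_{H_N,L,r,z}$ with failure probability at most $\sigma+1/10$.

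Next I would bring in the sample-count control. From the Markov-type computation immediately preceding the corollary statement, the event $\mathcal{E}_{\text{samp}}:=\{\sum_{l=-L}^{L}\#V_l\le 200L+100\}$ holds with probability at least $99/100$. On $\mathcal{E}_{\text{samp}}$ the algorithm sets $I^{*}(F_{N,\eta})_{H_N,L,r,z}=I(F_{N,\eta})_{H_N,L,r,z}$ by construction, so on the intersection of $\mathcal{E}_{\text{samp}}$ with the good event of Theorem \ref{T2}, the claimed pointwise error bound holds. A union bound gives probability at least $1-\sigma-1/10-1/100$, establishing the first estimate.

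For the median estimate I would invoke Lemma \ref{Median} exactly as in Theorem \ref{T1}, using the new single-trial failure probability $\sigma+1/10+1/100$ in place of $\sigma+1/10$. The resulting tail bound $(2\sqrt{\sigma+1/10+1/100})^t$ is immediate. For the RMSE bound, I would reuse the tail-decomposition argument from the proof of Theorem \ref{T1}. The crucial observation that makes this work without extra effort is the deterministic inequality $|I^{*}(F_{N,\eta})_{H_N,L,r,z}|\le |I(F_{N,\eta})_{H_N,L,r,z}|$, since $I^{*}$ is either $0$ or equal to $I$. Consequently the moment bound $\mathbb{E}|I(F_{N,\eta})_{H_N,L,r,z}|\le 2\|F\|_{L^2}\le 2$ transfers verbatim to $I^{*}$, and the tail estimate $\mathbb{P}\{|I^{*}|\ge 200k\}\le 1/(100k)$ for $k\ge 1$ follows in the same way, so the summation $\sum_{k\ge 1} E_k\le \Theta((2/\sqrt{100})^t)$ carries over unchanged.

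The only nonroutine step is confirming that the output truncation does not worsen the moment estimates; the observation $|I^{*}|\le |I|$ dispatches this cleanly, so the main bookkeeping item is simply carrying the extra $1/100$ failure probability through the union bounds and Lemma \ref{Median}. No new analytic input beyond Theorem \ref{T2} and the Markov calculation already given in the text is required.
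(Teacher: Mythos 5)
Your proposal is correct and follows essentially the same approach the paper intends: apply Theorem \ref{T2} to the periodization $F$ (noting $\|F\|_{H^s(\mathcal{T}^d)}\le 1$), union-bound with the sample-count event of probability $\ge 99/100$ to get the extra $1/100$, then rerun Lemma \ref{Median} and the tail-decomposition from Theorem \ref{T1} with the modified single-trial failure probability. The one nontrivial point you supply explicitly --- that $|I^{*}|\le |I|$ deterministically, so the first-moment bound $\mathbb{E}|I|\le 2\|F\|_{L^2}\le 2$ and the tail estimates carry over to $I^{*}$ unchanged --- is exactly what is needed and is left implicit in the paper.
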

\begin{remark}\label{2112233} 
We adopt the definition of the isotropic Sobolev space from \cite{ullrich2017Monte}, which differs subtly from that in \cite{krieg2017universal}. Nevertheless, the theorem remains applicable to the compactly supported isotropic Sobolev space as defined in \cite{krieg2017universal}.
\end{remark}
\begin{remark}
The analysis of the algorithm's sample complexity and polynomial tractability is parallel to that in Theorem \ref{T2} and Remark \ref{28888}.
\end{remark}
\section{Integration of Analytic and Wiener-type Functions}
In this section, we estimate the integral error for analytic and Wiener-type functions. We derive semi-exponential convergence rates for the numerical integration of analytic functions in the asymptotic sense. 

To this end, we first present a lemma to estimate the coefficients of higher-order derivatives of composite functions. Faà di Bruno's formula \cite{roman1980formula} states that the 
$k$-th order derivative of a composite function
$g\circ f$ admits the following representation: 
 \begin{equation}\label{faa}    
(g\circ f)^{(k)}(x)=\sum_{p=1}^{k}\left(\sum_{\substack{\sum_{j=1}^k j p_j = k \\ \sum_{j=1}^k p_j =p}}a(p,p_1,\dots,p_{k})g^{(p)}(f(x))\prod_{j=1}^{k} (f^{(j)}(x))^{p_j}\right),
\end{equation}
where $a(p,p_1,\dots,p_{k})$ are non-negative constants dependent only on $p,p_1,\dots,p_{k}$. 
\begin{lemma}\label{bucc}
Let $k\in \mathbb{N}$ and $a(p,p_1,\dots,p_{k})$ be defined as in (Eq.~\ref{faa}). It holds
$$\sum_{p=1}^{k}\sum_{\substack{\sum_{j=1}^k j p_j = k \\ \sum_{j=1}^k p_j = p}}a(p,p_1,\dots,p_{k})\le 4^{k}.$$
\end{lemma}
\begin{proof}
For $1\le q\le k$, consider $ g_q(x)=x^{q}/q!$ and $f(x)=e^x$. By Faà di Bruno's formula, 
\begin{equation*}\begin{split}
q^{q}/q!=(g_q\circ f)^{(q)}\big|_{x=0}&=\sum_{p=1}^{q}\biggl(\sum_{\substack{\sum_{j=1}^k j p_j = k \\ \sum_{j=1}^k p_j = p}}a(p,p_1,\dots,p_{k})\frac{1}{(q-p)!} \biggr)\\&\ge\sum_{\substack{\sum_{j=1}^k j p_j = k \\ \sum_{j=1}^k p_j = q}}a(p,p_1,\dots,p_{k})  .\end{split}   \end{equation*}
Summing over $q$ from $1$ to $k$ and using Stirling’s formula, we obtain:
$$\sum_{q=1}^{k}\sum_{\substack{\sum_{j=1}^k j p_j = k\\ \sum_{j=1}^k p_j = q}}a(p,p_1,\dots,p_{k})\le \sum_{q=1}^{k}q^{q}/q!\le k(ek)^{k}/k^{k} \le4^{k}.$$
\end{proof} 
Now, we present the main idea of the error analysis.
For any analytic function $f$ defined on $[0,1]^d$, we first use the trick of change of variable (see \cite{ullrich2017Monte,nguyen2017change,temlyakov2003cubature}) to construct a smooth function $Tf$,  which vanishes on the boundary of the hypercube $[0,1]^d$ and satisfies the integral identity$\int_{[0,1]^d}Tf(\boldsymbol{t})dt=\int_{[0,1]^d}f(\boldsymbol{t})dt.$ 
Specifically, we utilize the function introduced in \cite{nguyen2017change}:
\begin{equation}\label{ta1e99}
\psi(y) = 
\begin{cases} 
\frac{\int_{0}^{y} \exp(-1/(\xi(1-\xi))) \mathrm{~d} \xi }{ \int_{0}^{1} \exp(-1/(\xi(1-\xi))) \mathrm{~d} \xi} & \text{if } y\in[0,1],\\
1 & \text{if } y>1,\\
0 & \text{if } y<0.\\
\end{cases}
\end{equation}
For $\boldsymbol{t}=(t_1,t_2,\dots,t_d)\in \mathbb{R}^d$, let $f_{\psi}(\boldsymbol{t})=f(\psi(t_1),\psi(t_2),\dots,\psi(t_d)),$ and define $$Tf(\boldsymbol{t}):=\bigg|\prod_{j=1}^{d}\psi^{'}(t_{j})\bigg|f_{\psi}(\boldsymbol{t})=\left(\prod_{j=1}^{d}\psi^{'}(t_{j})\right)f_{\psi}(\boldsymbol{t}),$$
It follow that $\int_{[0,1]^d} f(\boldsymbol{t})dt=\int_{[0,1]^d} Tf(\boldsymbol{t})dt$. 
 For arbitrary
 analytic function $f$ defined on $[0,1]^d$,  applying the results proposed in \cite{pan2024super} (see page 2273), we know that there exist constants $C_{f,1}>1$ and $C_{f,2}>1$ depending only on $f$, such that 
\begin{equation*} 
\left|\frac{\partial^{J_{1}+\cdots+J_{d}} f}{\partial x_{1}^{J_{1}} \ldots \partial x_{d}^{J_{d}}}(\boldsymbol{t})\right| \le C_{f,1}(C_{f,2})^{J}J! \quad \text{for} \quad \boldsymbol{t}\in [0,1]^d,
\end{equation*}
where $\{J_{1},J_{2},\dots,J_{d}\}\subseteq \mathbb{N}_{0}$ and $J=J_{1}+\cdots+J_{d}$.

 We consider the more general case, i.e., Theorem \ref{ta1} applies to the functions that satisfy the  following inequality
\begin{equation}\label{ta1e1}
\left\|\frac{\partial^{J_{1}+\cdots+J_{d}} f}{\partial x_{1}^{J_{1}} \ldots \partial x_{d}^{J_{d}}}\right\|_{L^{1}[0,1]^d} \le C_{f,1}(C_{f,2})^{J}J! \quad \text{for} \quad \boldsymbol{t}\in [0,1]^d,
\end{equation}
where $C_{f,1}>1$ and $C_{f,2}>1$ are some constants depending only on $f$, $\{J_{1},J_{2},\dots,J_{d}\}\subseteq \mathbb{N}_{0}$ and $J=J_{1}+\cdots+J_{d}$. 
Both analytic functions defined on $[0,1]^d$ and the infinitely differentiable functions defined in \cite{suzuki2017super,dick2017construction} satisfy (Eq.~\eqref{ta1e1}).  By demonstrating that the Fourier coefficients of $Tf$ exhibit semi-exponential decay and using Theorem \ref{T1}, we establish the following result.

\begin{theorem}\label{ta1} 
Assume that $f$ satisfies (Eq.~\eqref{ta1e1}). Let the parameters satisfy $0<\sigma \le 1/10$, $n>3$, $0<\epsilon <\sigma/n$, $B=\lceil\frac{4n}{\sigma}\rceil$, $r=B \sqrt{\log (\frac{1} { \epsilon})}$, and $L=\lceil r\sqrt{2\log (\frac{1} { \epsilon})}\rceil$. Additionally, let $N$ be a prime number such that $N=\max\left\{\Theta\left(\frac{1}{(\sqrt{\sigma}\epsilon)^{2/d}}\right),\Theta\left((n/(\ln(L)\epsilon^2))^{2/d}\right),3L\right\}$. Then with respect to the randomness of 
$\boldsymbol{H}_{N}$, ${\boldsymbol{z}}$, and $\boldsymbol{\eta}$, the following inequality holds with a probability of at least  $1-1/(\ln L)$: 
\begin{equation}\label{T991}\bigg|I(Tf_{N,\boldsymbol{\eta}})_{\boldsymbol{H}_{N},L,r,\boldsymbol{z}}- \operatorname{INT}(f) \bigg |\le 
C_{f,0}\left(\exp \left(-\frac{ L^{{C_{*}/d}}}{C_{f,*}}\right) +\sqrt{L}\epsilon\right),
\end{equation} 
where $C_{f,0}$ represents positive constant dependent on both the function $f$ and the dimension $d$, $C_{f,*}$ denotes positive constant dependent on $f$,
and $C_{*}$ is a positive absolute constant.
Let ${\rm median}\{I(Tf_{N,\boldsymbol{\eta}_{j}})_{\boldsymbol{H}_{N_j},L,r,\boldsymbol{z}_{j}}\}_{j=1}^{t}$ be defined as in Theorem \ref{T1}. Then the following inequality holds with a probability of at least $1-(2\sqrt{1/(\ln L)})^t$
\begin{equation}\label{T92}\left|{\rm median}\{I(Tf_{N,\boldsymbol{\eta}_{j}})_{\boldsymbol{H}_{N_j},L,r,\boldsymbol{z}_{j}}\}_{j=1}^{t}-\operatorname{INT}(f)\right| \le C_{f,0}\left(\exp\left(-\frac{ L^{{C_{*}/d}}}{C_{f,*}}\right)+\sqrt{L}\epsilon\right). \end{equation}
Furthermore, 
\begin{equation}\label{T993}
\begin{split}
\bigg(\mathbb{E}_{\{\boldsymbol{H}_{N_j},\boldsymbol{z}_{j},\boldsymbol{\eta}_{j}\}_{j=1}^t}\big|{\rm median}\{I(Tf_{N,\boldsymbol{\eta}})_{\boldsymbol{H}_{N_j},L,r,\boldsymbol{z}_{j}}\}_{j=1}^{t}-\operatorname{INT}(f)\big|^{2}\bigg)^{1/2}\\ \le C_{f,0}\left(\exp\left(-\frac{ L^{{C_{*}/d}}}{C_{f,*}}\right)+\sqrt{L}\epsilon+\left(2\sqrt{1/(\ln L)}\right)^t\right).~~~~~~~~~~~
\end{split}
\end{equation}   
\end{theorem}
\begin{proof}  
We start the proof by estimating $\left\|\frac{\partial^{k} (Tf)}{\partial t_{j}^{k} }\right\|_{L^{1}([0,1]^{d})}$.  Let $(g\circ h)(y):=\exp\left(\frac{-1}{y(1-y)}\right)$,  where $g(x)=e^x$, $h(x)=\frac{-1}{x(1-x)}$. 
For every $k\in \mathbb{N}$, using Lemma \ref{bucc}, we have
\begin{equation}\label{leisil}
\begin{split}
 \sup_{y\in (0,1)}\left|\frac{d^{k} \psi}{d y^{k} }(y) \right|= \sup_{y\in (0,1/2]}\left|\frac{d^{k} \psi}{d y^{k} }(y) \right|&\le   \sup_{y\in (0,1/2]}\left|\frac{4^{k}k!\left(\frac{1}{y^{2k}}+\frac{1}{(1-y)^{2k}}\right)\exp\left(-\frac{1}{y}-\frac{1}{1-y}\right) }{\int_{0}^{1} \exp(-1/(\xi(1-\xi)))d\xi}  \right| 
 \\ &\le \sup_{y\in (0,1/2]}\frac{c_{1} 4^{k}k!  }{y^{2k}}\exp\left(\frac{-1}{y}\right)=c_{1}( 4 k/e)^{ 2k } k! .
 \end{split}
\end{equation}
Here, $c_{1}= 2 e^2/\int_{0}^{1} \exp(-1/(\xi(1-\xi)))d\xi$ represents an absolute constant. 
For the last inequality in  (Eq.~(\ref{leisil})), we have used the fact that the function  $x^{2k}/e^x$  attains its maximum on the interval \((1,+\infty)\) at \(x =  2k \).

Combining (Eq.~(\ref{ta1e1})), (Eq.~(\ref{leisil})) with  Lemma \ref{bucc}, we deduce that
\begin{equation}\label{leisi2}
 \left\|\frac{\partial^{k} f_{\psi}}{\partial t_{j}^{k} }\right\|_{L^{1}([0,1]^d)} \le  4^{k} C_{f,1}(C_{f,2})^{k}k! (c_{1}+1)^k( 4k )^{2k} k! \quad \text{for} \quad \text{every} \quad k\in \mathbb{N}.
\end{equation}
From (Eq.~(\ref{leisil})) and (Eq.~(\ref{leisi2})), by applying Leibniz's formula and Stirling's formula, we can prove that there exist absolute constants $C_1$ and $C_2$ such that 
\begin{equation}\label{ta1e2}
 \left\|\frac{\partial^{k} (Tf)}{\partial t_{j}^{k} } \right\|_{ {L^{1}([0,1]^d)}} \le C_{f,1}(C_{1}C_{f,2}k)^{C_{2}(k+d)} \quad \text{for}~\text{every} ~k\in \mathbb{N}~\text{and}~ 1\le j\le d.
\end{equation}
Employing  (Eq.~(\ref{ta1e2})) and integration by parts, we obtain
$$|\widehat{Tf}(\boldsymbol{w}) |\le C_{f,1}(C_{1}C_{f,2}k)^{C_{2}(k+d)}/ \|\boldsymbol{w}\|_{\infty}^k\quad \text{for} \quad  \text{every} \quad k\in \mathbb{N} ,$$
where $\boldsymbol{w}=(w_{1},w_{2},\dots,w_{d})\in \mathbb{N}_{0}^{d}\backslash\{\boldsymbol{0}\}$ and $ \|\boldsymbol{w}\|_{\infty}:=\max_{1\le j\le d}\{|w_{1}|,|w_{2}|,\dots,|w_{d}|\}$. 

By choosing $k=\max\left\{\left\lfloor\frac{( \|\boldsymbol{w}\|_{\infty}/e)^{1/(2C_{2})}}{C_{1}C_{f,2}}\right\rfloor,d\right\}$,
when $\frac{( \|\boldsymbol{w}\|_{\infty}/e)^{1/(2C_{2})}}{C_{1}C_{f,2}}\ge d+1$, we derive
$$|\widehat{Tf}(\boldsymbol{w}) |\le C_{f,1}\exp\left(-\left\lfloor\frac{( \|\boldsymbol{w}\|_{\infty}/e)^{1/(2C_{2})}}{C_{1}C_{f,2}}\right\rfloor\right).$$ 
Consequently, there exist positive constants $C_3$ and $C_{f,3}$ such that 
 $$|\widehat{Tf}(\boldsymbol{w}) |\le C_{f,1}e^{-\frac{  \|\boldsymbol{w}\|_{\infty}^{C_{3}}}{C_{f,3}}} \quad \text{for} \quad \frac{  \|\boldsymbol{w}\|_{\infty}^{C_{3}}}{C_{f,3}} \ge d+1. $$ 
Next, let $\beta>1$, $\rho>1$, and $\lambda>0$. Through a series of deductions, we obtain $$\sum_{m=1}^{\infty}m^{d}\rho^{-(m\beta)^{\lambda}}=\rho^{-(\beta)^{\lambda}}\sum_{m=1}^{\infty}m^{d}\rho^{-((m\beta)^{\lambda}-\beta^{\lambda})}\le \rho^{-(\beta)^{\lambda}}\sum_{m=1}^{\infty}m^{d}\rho^{- (m^{\lambda}-1)}=C_{\rho,\lambda,d}\rho^{-(\beta)^{\lambda}},$$
where $C_{\rho,\lambda,d}:=\sum_{m=1}^{\infty}m^{d}\rho^{- m^{\lambda}+1}$. Therefore, for $\beta>1$, there exists a constant $C_{f,4,d}$ depending on $f$ and $d$, and a constant $C_{f,5}$ depending on $f$, such that
\begin{equation*}
\begin{split}
\bigg(\sum_{ \|\boldsymbol{w}\|_{\infty}> \beta}|\widehat{Tf}(\boldsymbol{w}) |^{2}\bigg)^{1/2}&\le \bigg(\sum_{j=1}^{ +\infty }\sum_{(j+1)\beta\ge  \|\boldsymbol{w}\|_{\infty}>j \beta}|\widehat{Tf}(\boldsymbol{w}) |^{2}\bigg)^{1/2}\le \bigg(\sum_{j=1}^{ +\infty }(j+1)^{d}\beta^{d}C_{f,1}e^{-\frac{ (j\beta)^{C_{3}}}{C_{f,3}}}\bigg)^{1/2}\\
&\le C_{f,4,d}(\beta)^{d/2}e^{-\frac{ \beta^{C_{3}}}{C_{f,5}}}.
\end{split} 
\end{equation*}
Let $K\ge2$ be a positive integer, and define $\beta_{K,d}=\left((K-1)^{1/d}-1\right)/2$. We decompose $Tf$ as 
$$Tf(\boldsymbol{x})=a_{\boldsymbol{0}}+g(\boldsymbol{x})+R(\boldsymbol{x}),$$
where $$g(\boldsymbol{x})=\sum_{\boldsymbol{w}\in  \mathbb{Z}^{d}\cap[-\beta_{K,d},\beta_{K,d}]^{d} \backslash\{\boldsymbol{0}\}  }a_{\boldsymbol{w}}\exp(2\pi i \boldsymbol{w}\cdot \boldsymbol{x}),$$
, $$R(\boldsymbol{x})=\sum_{\boldsymbol{w} \notin  \mathbb{Z}^{d}\cap[-\beta_{K,d},\beta_{K,d}]^{d} }a_{\boldsymbol{w}}\exp(2\pi i \boldsymbol{w}\cdot \boldsymbol{x}).$$
Then
$$\#\{\boldsymbol{w}| \boldsymbol{w}\in  \mathbb{Z}^{d}\cap[-\beta_{K,d},\beta_{K,d}]^{d} \backslash\{\boldsymbol{0}\}  \}\le K,$$ and $$\|R\|_{L^{2}([0,1]^d)}^2 \le C_{f,4,d}(\beta_{K,d})^{d/2}e^{-\frac{ \beta_{K,d}^{C_{3}}}{C_{f,5}}}\le C_{f,4,d} \sqrt{K}e^{-\frac{ (K-1)^{C_{3}/d}}{4^{C_{3}}C_{f,5}}}.$$
By choosing   $ \sigma =\sigma_{2}=1/(2\ln L)$ in Theorem \ref{T1}, we successfully complete the proof.
\end{proof}
\begin{remark}\label{jiexi}
By substituting  \(\epsilon = \exp\left(-\frac{L^{C_{*}/d}}{C_{f,*}}\right)\) into (Eq.~(\ref{T991})), we demonstrate that the error exhibits semi-exponential behavior in relation to \(L\), a property that holds with a probability of at least \(1 - \frac{1}{\ln L}\). This result is comparable to Theorem 2 presented in \cite{pan2024super}. Furthermore, the results obtained from Eq.~(\ref{T993}) can be compared with Corollary 3 in \cite{pan2024super}. In \cite{pan2024super}, the upper bound for the convergence rate is articulated as \(C_{f}M^{-C\log(M)/d}\), where \(C_{f}\) is a constant dependent on \(f\) and \(d\), and \(C\) is an absolute constant such that \(C < 0.21\). Asymptotically, our convergence order significantly surpasses the findings reported in \cite{pan2024super}, despite the fact that the constant in the exponential term of our error bounds is contingent upon \(f\).  Moreover, our algorithm also achieves sub-exponential convergence for the  infinitely differentiable functions   as defined in \cite{suzuki2017super,dick2017construction}. 
\end{remark}
\begin{remark}
From the standpoint of sample complexity, our theorem establishes the existence of $M$ samples that can achieve a semi-exponential convergence rate. However, to achieve high precision with this method, it also involves approximating the integral
$\int_{0}^{y} \exp(-1/( \xi (1- \xi )))d \xi$. For future investigations, we propose an alternative framework: for any specified sample size $M$, we will employ B-splines to construct a $k_{M}$-order smooth function $\psi_{k_{M}}$ as a substitute for the function $\psi$ defined in (Eq.~(\ref{ta1e99})).
\end{remark}

We further deduce the following theorem concerning the subspace of the Wiener algebra introduced in \cite{goda}: 
\begin{equation*}\mathcal{A_{\kappa}}:= \left\{f\in C(\mathcal{T}^d)\middle| \|f\|_{\mathcal{A}_{\kappa}}:=  |\hat{f}(\boldsymbol{0})|+ \sum_{\boldsymbol{k}=(k_1,k_2,\dots,k_{d})\in \mathbb{Z}^{d} \backslash\{\boldsymbol{0} \}} |\hat{f}(\boldsymbol{k})\kappa(\min_{j\in\supp(\boldsymbol{k})}|k_{j}|)  |< \infty\right\},\end{equation*}
where  $\kappa(x)\ge 1$  is an increasing function on $(0,+\infty)$. For any $0<\epsilon \le 1/2$, there exists a positive number $N_{\kappa,\epsilon}$ such that $\kappa(N_{\kappa,\epsilon})\ge 1/\epsilon$.

Note that in the following theorem, we use $I(f)_{\boldsymbol{H}_{N_j},L,r,\boldsymbol{z}_{j}}$ instead of $I(f_{N,\boldsymbol{\eta}_{j}})_{\boldsymbol{H}_{N_j},L,r,\boldsymbol{z}_{j}}$.
\begin{theorem}\label{weina}
Let $f\in \mathcal{A_{\kappa}}$ with $\|f\|_{\mathcal{A}_{\kappa}}\le1$ . Let $n>2$, $0<\sigma \le 1/10$, $0<\epsilon <\sigma/n$. Define $B=\lceil4n/\sigma\rceil$, $r=B \sqrt{\log (1 / \epsilon)}$, $L=\lceil r\sqrt{2\log (1/\epsilon)}\rceil$. Let  $N\ge \max\{\lceil N_{\kappa,\epsilon} \rceil, 3L\}$ be a prime number. Then, over the randomness of $\boldsymbol{H}_{N}$ and $\boldsymbol{z}$, with
probability at least $1-\sigma$, it holds
\begin{equation}\label{T41}\bigg|I(f)_{\boldsymbol{H}_{N},L,r,\boldsymbol{z}}- \operatorname{INT}(f) \bigg |\le \Theta\left(\frac{\log(1/\epsilon)}{L\sigma} +\epsilon\right).\end{equation} 
For the ${\rm median}\{I(f)_{\boldsymbol{H}_{N_j},L,r,\boldsymbol{z}_{j}}\}_{j=1}^{t}$ (as defined in Theorem \ref{T1}), with probability at least $1-(2\sqrt{\sigma})^t$, 
\begin{equation}\label{T42}\big|{\rm median}\{I(f)_{\boldsymbol{H}_{N_j},L,r,\boldsymbol{z}_{j}}\}_{j=1}^{t}-\operatorname{INT}(f)\big| \le \Theta\left(\frac{\log(1/\epsilon)}{L\delta}+\epsilon\right). \end{equation}
Furthermore, 
\begin{equation}\label{T43}
\begin{split}
\bigg(\mathbb{E}_{\{\boldsymbol{H}_{N_j},\boldsymbol{z}_{j},\boldsymbol{\eta}_{j}\}_{j=1}^t}\big|{\rm median}\{I(f)_{\boldsymbol{H}_{N_j},L,r,\boldsymbol{z}_{j}}\}_{j=1}^{t}-\operatorname{INT}(f)\big|^{2}\bigg)^{1/2} \le \Theta\left(\frac{\log(1/\epsilon)}{L\sigma}+ \epsilon+ \left(2\sqrt{\sigma}\right)^t\right).
\end{split}
\end{equation}    
\end{theorem}
\begin{proof}
Let $f(\boldsymbol{x}):=\sum_{\boldsymbol{w}\in\mathbb{Z}^d} a_{\boldsymbol{w}}\exp(2\pi i \boldsymbol{w}\cdot \boldsymbol{x})$, then $\sum_{\boldsymbol{w}\in\mathbb{Z}^d} |a_{\boldsymbol{w}}|\le 1$. Let $$\boldsymbol{\xi}_{N}=\{(t_{1}N,t_{2}N,\dots,t_{d}N)|(t_{1} ,t_{2},\dots,t_{d})\in \mathbb{Z}^d\backslash\{\boldsymbol{0}\}\}.$$
Since $N\ge  \lceil N_{\kappa,\epsilon} \rceil $, we have  $$|\widetilde{f}^{(N)}(\boldsymbol{0})-a_{\boldsymbol{0}}|\le \sum_{\boldsymbol{w}\in \boldsymbol{\xi}_{N}}|a_{\boldsymbol{w}}|\le (1/\kappa(N_{\kappa,\epsilon}))\sum_{\boldsymbol{w}\in \boldsymbol{\xi}_{N}}|a_{\boldsymbol{w}}|\kappa(\min_{j\in\supp(\boldsymbol{w})}|w_{j}|)\le \epsilon,$$
where $\widetilde{f}^{(N)}(\boldsymbol{w})$ is defined in (Eq.~(\ref{dft})). Combining the above inequality
and (Eq.~(\ref{l23e11})), we have
$$\left|\sum_{|l|\le L} \widetilde{f}^{(N)}(\boldsymbol{0}) G_{r,l}-a_{\boldsymbol{0}}\right|  \le \Theta(\epsilon ).$$
Decompose $f$ as \begin{equation}\label{WW}
f(\boldsymbol{z}/N)=\widetilde{f}^{(N)}(\boldsymbol{0})+g(\boldsymbol{z}/N).\end{equation}
Evidently, $\widetilde{g}^{(N)}(\boldsymbol{0})=0$. Let $b_{\boldsymbol{w}}=\widetilde{g}^{(N)}(\boldsymbol{w})$, then  $\sum_{\boldsymbol{w}\in \mathbb{Z}_{N}^{d}\backslash\{\boldsymbol{0}\}}|b_{\boldsymbol{w}}|\le1$ . 
Combining (Eq.~(\ref{l23e11})) and (Eq.~(\ref{unb})), we obtain
$$ \mathbb{E}_{\boldsymbol{H}_{N}}\bigg(\Big|\sum_{|l|\le L} \exp(2\pi i \boldsymbol{w}\cdot (\boldsymbol{z}-l\boldsymbol{H}_{N})/N)G_{r,l}\Big|\bigg|\boldsymbol{w}\in U_{\boldsymbol{H}_N,B} \bigg)  \mathbb{P}\Big\{\boldsymbol{w}\in U_{\boldsymbol{H}_N,B}\Big\}\le  \Theta (1/B).
$$
Using (Eq.~(\ref{l23e10})), we have 
$$ \mathbb{E}_{\boldsymbol{H}_{N}}\bigg(\Big|\sum_{|l|\le L} \exp(2\pi i \boldsymbol{w}\cdot (\boldsymbol{z}-l\boldsymbol{H}_{N})/N)G_{r,l}\Big|\bigg|\boldsymbol{w}\notin U_{\boldsymbol{H}_N,B} \bigg) \mathbb{P}\Big\{\boldsymbol{w}\notin U_{\boldsymbol{H}_N,B}\Big\}\le  \Theta (\epsilon(1-1/B)).
$$
Therefore,
\begin{equation*}
\begin{split}
&\mathbb{E}_{\boldsymbol{H}_{N}}\left|\sum_{|l|\le L}g\left(\left\{\frac{\boldsymbol{z}-l\boldsymbol{H}_{N}}{N}\right\}\right) G_{r,l}\right|\\
&\le \sum_{\boldsymbol{w}\in \mathbb{Z}_{N}^{d}\backslash\{\boldsymbol{0}\}}\mathbb{E}_{\boldsymbol{H}_{N}}\left|b_{\boldsymbol{w}}\right|\Big|\sum_{|l|\le L}\exp(2\pi i \boldsymbol{w}\cdot (\boldsymbol{z}-l\boldsymbol{H}_{N})/N)G_{r,l}\Big|\\
&=\sum_{\boldsymbol{w}\in \mathbb{Z}_{N}^{d}\backslash\{\boldsymbol{0}\}}\bigg(\left|b_{\boldsymbol{w}}\right| \mathbb{E}_{\boldsymbol{H}_{N}}\bigg(\Big|\sum_{|l|\le L} \exp(2\pi i \boldsymbol{w}\cdot (\boldsymbol{z}-l\boldsymbol{H}_{N})/N)G_{r,l}\Big|\bigg|\boldsymbol{w}\in U_{\boldsymbol{H}_N,B} \bigg)  \mathbb{P}\Big\{\boldsymbol{w}\in U_{\boldsymbol{H}_N,B}\Big\}\\
&+|b_{\boldsymbol{w}} |\mathbb{E}_{\boldsymbol{H}_{N}}\left(\Big|\sum_{|l|\le L} \exp(2\pi i \boldsymbol{w}\cdot (\boldsymbol{z}-l\boldsymbol{H}_{N})/N)G_{r,l}\Big|\bigg|\boldsymbol{w}\notin U_{\boldsymbol{H}_N,B} \right) \mathbb{P}\Big\{\boldsymbol{w}\notin U_{\boldsymbol{H}_N,B}\Big\}\bigg)\\&\le \Theta (1/B+\epsilon)\sum_{\boldsymbol{w}\in \mathbb{Z}_{N}^{d}\backslash\{\boldsymbol{0}\}}|b_{\boldsymbol{w}}|\le \Theta (1/B+\epsilon) .
\end{split}
\end{equation*}
Recalling the conditions $\epsilon<\sigma/n$ and $B=\lceil4n/\sigma\rceil$, we apply Markov's inequality to obtain:
\begin{equation}\label{W41}
\mathbb{P}\left\{ \left|\sum_{|l|\le L}g\bigg(\bigg\{\frac{\boldsymbol{z}-l\boldsymbol{H}_{N}}{N}\bigg\}\bigg) G_{r,l}\right|  \le \Theta\bigg(\frac{1}{n} \bigg)\right\}\ge 1-\sigma.
\end{equation}
Therefore, (Eq.~(\ref{T41})) and  (Eq.~\eqref{T42}) can be derived by combining (Eq.~(\ref{WW})) and  (Eq.~\eqref{W41}).  By employing a method analogous to the proof of (Eq.~\eqref{253}), we establish (Eq.~(\ref{T43})).
\end{proof}
\begin{remark}\label{recon}
If we further assume that $f$ is H\"{o}lder continuous (see \cite{chen2024information,Dickwl}), a similar conclusion remains valid. In this scenario, it suffices to replace $N_{\kappa,\epsilon}$ with a quantity dependent on the H\"{o}lder continuity parameters and $\epsilon$.
\end{remark}
\begin{remark}\label{reweina}
By (Eq.~(\ref{1021})) and (Eq.~(\ref{T43})), the sample size $M=t(2L+1)$. By setting $\sigma=1/10$, $t=\log_{\sqrt{5}/2} n$, and $\epsilon=1/n$ in (Eq.~(\ref{T43})), we derive that the upper bound of the RMSE error is $\Theta((\log M)^2/M)$.
\end{remark}
\begin{remark}
  By employing the results on the best n-term approximation in the $L^2$ sense (see, e.g., Lemma 1 in \cite{barron1993universal}), the functions in $\mathcal{A_{\kappa}}$ can be reformulated as the form \eqref{fcsp}. This reformulation then permits the application of Theorem \ref{T1} to obtain the conclusion of Theorem \ref{weina} by using $I(f_{N,\boldsymbol{\eta}_{j}})_{\boldsymbol{H}_{N_j},L,r,\boldsymbol{z}_{j}}$ instead of $I(f)_{\boldsymbol{H}_{N_j},L,r,\boldsymbol{z}_{j}}$.
\end{remark}

\section{Integration in Weighted Sobolev Spaces}
In this section, we provide a theoretical analysis of our integration algorithm within the weighted Sobolev space defined as
 $$H_{\gamma,s}(\mathcal{T}^d)=\left\{ f \in L^{2}(\mathcal{T}^d) \middle|\|f\|_{\gamma,s}^{2}:=\sum_{\boldsymbol{w}=(w_{1},\dots,w_{d})\in \mathbb{Z}^d}(\Gamma_{\gamma,s}(\boldsymbol{w}))^{2}|\hat{f}(\boldsymbol{w})|^2< \infty \right\},$$
where 
$$\Gamma_{\gamma,s}(\boldsymbol{w})=(\gamma_{\supp(\boldsymbol{w})})^{s}\prod_{j=1}^{d}\max \left\{1,  \left|w_j\right|^s \right\},$$
and  $\gamma_{u}:u\subseteq\{1,2,\dots,d\}\rightarrow [1,+\infty)$ is a weight function. Note that $\gamma_{\supp(\boldsymbol{0})}=\gamma_{\emptyset}$, $\emptyset\subseteq \{1,2,\dots,d\}$.
\begin{theorem}\label{T3} Let $f\in H_{\gamma,s}(\mathcal{T}^d)$ with  $\|f\|_{\gamma,s}\le 1$ , $s> 0$, and $\alpha \ge 1$. Let $0<\sigma\le 1/10$, $n>1$, $0<\epsilon <\sigma/n$. Let $B=\left\lceil 4n/\sigma\right\rceil$, $r=B \sqrt{\log \left(1/\epsilon\right)}$, $L=\left\lceil r\sqrt{2\log \left(1/\epsilon\right)}\right\rceil$, and  $N=\max\{\Theta(1/(\sqrt{\sigma}\epsilon)^{2/d}),\Theta((L^{2/d}B^2)/\epsilon^{4/d}),3L\}$ be a prime number. Under the conditions $L< N/3$, with probability at least  $1-\sigma-1/10$ over the randomness of $\boldsymbol{\eta}$, $\boldsymbol{H}_{N}$, and $\boldsymbol{z}$, it holds
\begin{equation}\label{T31}
\begin{split}
\left|I(f_{N,\boldsymbol{\eta}})_{\boldsymbol{H}_{N},L,r,\boldsymbol{z}}- \operatorname{INT}(f) \right|\le \Theta\left(\frac{C_{\alpha,s,\gamma,N}(\log(1/\epsilon))^{s/\alpha+1 / 2}}{ (L\delta)^{s/\alpha+1 / 2}}+\epsilon\right).
\end{split}
\end{equation} 
For the ${\rm median}\{I(f)_{\boldsymbol{H}_{N_j},L,r,\boldsymbol{z}_{j}}\}_{j=1}^{t}$ (as defined in Theorem \ref{T1}), the following inequality holds with a probability of at least $1-(2\sqrt{\sigma+1/10})^t$:
\begin{equation}\label{T32}
\begin{split}
\left|{\rm median}\{I(f_{N,\boldsymbol{\eta}_{j}})_{\boldsymbol{H}_{N_j},L,r,\boldsymbol{z}_{j}}\}_{j=1}^{t}-\operatorname{INT}(f)\right| \le \Theta\left(\frac{C_{\alpha,s,\gamma,N}(\log(1/\epsilon))^{s/\alpha+1 / 2}}{ (L\delta)^{s/\alpha+1 / 2}}+\epsilon\right).
\end{split}
\end{equation}
Furthermore, it holds
\begin{equation}\label{T33}
\begin{split}
&\left(\mathbb{E}_{\{\boldsymbol{H}_{N_{j}},\boldsymbol{z}_{j},\boldsymbol{\eta}_{j}\}_{j=1}^t}\big|{\rm median}\{I(f_{N,\boldsymbol{\eta}})_{\boldsymbol{H}_{N_j},L,r,\boldsymbol{z}_{j}}\}_{j=1}^{t}-\operatorname{INT}(f)\big|^{2}\right)^{1/2} \\&\le \Theta\left(\frac{C_{\alpha,s,\gamma,N}(\log(1/\epsilon))^{s/\alpha+1 / 2}}{ (L\delta)^{s/\alpha+1 / 2}}+\epsilon+   (2\sqrt{\sigma+1/10})^{t}\right),~~~~~~~~~~~
\end{split}
\end{equation}
where $
C_{\alpha,s,\gamma,N}:= \left(\sum_{\boldsymbol{w}\in  Q_{N/2}\backslash\{\boldsymbol{0}\}}\frac{1}{(\Gamma_{\gamma,s}(\boldsymbol{w}))^{\alpha/s}}\right)^{s/\alpha}+1$ and $Q_{N/2}:=\mathbb{Z}_{N}^d\cap(-N/2,N/2)^d$.
\end{theorem}
\begin{remark}\label{piu} By (Eq.~(\ref{1021})) and (Eq.~(\ref{T33})), the sample size is $M=t(2L+1)$. By setting $\sigma=1/10$, $t=\log_{\sqrt{5}/2} (n^{s+1/2})$, and $\epsilon=1/n^{s+1/2}$ in (Eq.~(\ref{T33})), we derive that the upper bound of the RMSE error is: 
\begin{equation}\label{sp3333}
C'C_{\alpha,s,\gamma,N} (3+4s)^{4s+3}(\log M)^{2s+1} /M^{s+1/2},
\end{equation}
where $C'$ denotes an absolute constant.  
\begin{itemize}
    \item When $\alpha=1$ and 
    $\gamma_{\supp(\boldsymbol{w})}=1$, $C_{\alpha,s,\gamma,N}\le (\Theta(1)\log M) )^d$.
    \item  When $\alpha>1$ and $\gamma_{\supp(\boldsymbol{w})}=\prod_{j\in \supp(\boldsymbol{w})}(1/\gamma_{j})$ with $\gamma_{j}\le 1$, the RMSE error \eqref{sp3333} is comparable to that in \cite{kritzer2019lattice}, showing no clear superiority between the two algorithms. Both algorithms are polynomial tractable, provided that $\sum _{j=1}^{d}\gamma_{j}:=C_{\gamma}$ and $\zeta(\alpha)$ are treated as constants (as in \cite{kritzer2019lattice}), where $\zeta$ denotes the Riemann zeta function.
\end{itemize}
Compared to \cite{kritzer2019lattice}, our algorithm does not require prior information on the weights. Moreover, if the prior information of the order $s$ is  unknown, setting $\epsilon=1/(n^{\log n})$ and $t=\log_{\sqrt{5}/2} n^{\log n})$ renders our algorithm universal for $s$.
\end{remark}
\begin{remark}\label{goda2} Recently, \cite{goda2024simple} proposed a nearly optimal algorithm for the case where the order $s>1/2$ and $ \sum _{j=1}^{d}\gamma_{j}:=C_{\gamma}< \infty$. Their approach also avoids requiring prior knowledge of the weights or the order $s$. Comparable to \cite{goda2024simple}, our algorithm applies to the scenario $s>0$, making it suitable for discontinuous and non-periodic functions.
As noted earlier, these two works are independent of each other and different in terms of algorithms.
\end{remark}
\begin{proof}
Our proof draws inspiration from  \cite{kritzer2019lattice}. For $\alpha \ge 1$, define
$$V_{d,\alpha,N}:=\sum_{\boldsymbol{w}\in  Q_{N/2}\backslash\{\boldsymbol{0}\}}\frac{1}{(\Gamma_{\gamma,s}(w))^{\alpha/s}}.$$
By Lemma \ref{seeeaa}, for $N/3 \ge B\ge 4n/\sigma$,
$$
\mathbb{E}_{\boldsymbol{H}_{N}}\sum_{\boldsymbol{w}\in  Q_{N/2}\backslash\{\boldsymbol{0}\}}\mathbb{I}_{\boldsymbol{w}\in U_{\boldsymbol{H}_{N},B}}\frac{1}{(\Gamma_{\gamma,s}(\boldsymbol{w}))^{\alpha/s}}\le  \frac{\sigma V_{d,\alpha,N}}{n}.
$$
By Markov’s inequality, the inequality
$$\sum_{\boldsymbol{w}\in  Q_{N/2}\backslash\{\boldsymbol{0}\}}\mathbb{I}_{\boldsymbol{w}\in U_{\boldsymbol{H}_{N},B}}\frac{1}{(\Gamma_{\gamma,s}(\boldsymbol{w}))^{\alpha/s}}\le  \frac{\sigma V_{d,\alpha,N}}{n\sigma}=  \frac{V_{d,\alpha,N}}{n}$$
holds with probability at least $1-\sigma$ over $\boldsymbol{H}_{N}$. Consequently, with probability at least $1-\sigma$, 
\begin{equation}\label{prt3}
\boldsymbol{w}\notin U_{\boldsymbol{H}_{N},B} \quad \text{for} \quad \text{all} \quad \boldsymbol{w}\in\left\{\boldsymbol{\xi}\in  Q_{N/2}\backslash\{\boldsymbol{0}\} \middle|(\Gamma_{\gamma,s}(\boldsymbol{\xi}))^{\frac{1}{s}}<\bigg(\frac{n}{V_{d,\alpha,N}}\bigg)^{\frac{1}{\alpha}}\right\}.
\end{equation}
Let 
\begin{equation}\label{bcdy}  
U_{1}:=\left\{\boldsymbol{w}\in \mathbb{Z}^{d}\cap [-B,B]^d\backslash\{\boldsymbol{0}\}\middle|(\Gamma_{\gamma,s}(\boldsymbol{w}))^{\frac{1}{s}}<\left(\frac{n}{V_{d,\alpha,N}}\right)^{\frac{1}{\alpha}}\right\}.
\end{equation}
 Recall that $N/3 \ge B\ge 4n/\sigma$, by (Eq.~\eqref{prt3}) and (Eq.~\eqref{bcdy}), with probability at least $1-\sigma$,
\begin{equation}\label{aijia}
\boldsymbol{w}\notin U_{\boldsymbol{H}_{N},B} \quad \text{for} \quad \boldsymbol{w}\in U_1.
\end{equation}

For any $f\in H_{\gamma,s}(\mathcal{T}^d)$, decompose $f$ as
$$f(\boldsymbol{x})=a_{\boldsymbol{0}}+g(\boldsymbol{x})+R(\boldsymbol{x}),$$
where
$$a_{\boldsymbol{0}}=\operatorname{INT}(f),g(\boldsymbol{x})=\sum_{\boldsymbol{w}\in  U_1}a_{\boldsymbol{w}}\exp(2\pi i \boldsymbol{w}\cdot \boldsymbol{x}),~\text{and}~R(\boldsymbol{x})=\sum_{\boldsymbol{w}\in \mathbb{Z}^{d} \backslash (U_{1}\cup\{\boldsymbol{0}\}) }b_{\boldsymbol{w}}\exp(2\pi i \boldsymbol{w}\cdot \boldsymbol{x}).$$ 
Since $ \gamma_{supp(\boldsymbol{w})}\in [1,\infty)$,
\begin{equation*}
\begin{split}
\|R\|_{L^{2}([0,1]^d)}\le \frac{1}{B^s}+\left(\frac{V_{d,\alpha,N}}{n}\right)^{s/\alpha}\le \frac{(V_{d,\alpha,N})^{s/\alpha}+1}{n^{s/\alpha}}\le\Theta\left(\frac{ C_{\alpha,s,\gamma,N}(\log(1/\epsilon))^{s/\alpha}}{(L\sigma)^{s/\alpha}}\right).
\end{split}
\end{equation*}
 By Theorem \ref{T1}, 
\begin{equation}\label{fff8}
\begin{split}
\mathbb{P}\left\{\left|I((a_{\boldsymbol{0}}+R)_{N,\boldsymbol{\eta},\boldsymbol{H}_{N},L,r,\boldsymbol{z}}-a_{\boldsymbol{0}} \right|\le  \Theta\left( \epsilon+ \frac{C_{\alpha,s,\gamma,N}(\log(1/\epsilon))^{s/\alpha+1/2}}{(L\sigma)^{s/\alpha+1/2}}
\right) \right\}\ge 1-\sigma/2-1/30.
\end{split}
\end{equation} 
  By (Eq.~\eqref{tdingyi}), (Eq.~\eqref{aijia}), and (Eq.~\eqref{whm}), we have with probability at least $1-\sigma$ that
\begin{equation}\label{aijia2}
\mathbb{E}_{\boldsymbol{H}_{N}}\big(t_{\boldsymbol{w}}|\boldsymbol{w}\in U_{1}\big)\le \Theta(\epsilon^2 ).
\end{equation}
According to (Eq.~\eqref{aijia2}) and (Eq.~\eqref{zj}), we have with probability at least $1-\sigma$ that
\begin{equation*}
\begin{split}
\mathbb{E}_{\boldsymbol{H}_{N}}\mathbb{E}_{\boldsymbol{z}}\left|\sum_{|l|\le L}g\bigg(\bigg\{\frac{\boldsymbol{z}-l\boldsymbol{H}_{N}}{N}\bigg\}\bigg) G_{r,l}\right|^2=\mathbb{E}_{\boldsymbol{H}_{N}} \sum_{\boldsymbol{w}\in  U_{1} }\left|a_{\boldsymbol{w}}\right|^{2}t_{\boldsymbol{w}}\le \Theta(\epsilon^2) .
\end{split}
\end{equation*} 
Applying Markov’s inequality gives
\begin{equation}\label{21}
    \bigg|\sum_{|l|\le L}g\bigg(\bigg\{\frac{\boldsymbol{z}-l\boldsymbol{H}_{N}}{N}\bigg\}\bigg) G_{r,l}\bigg|^2\le 30\epsilon^2 ,
\end{equation}
with probability at least $1-\sigma-1/30$ over the randomness of  $\boldsymbol{H}_{N}$ and $\boldsymbol{z}$. 

Using the first inequality in  (Eq.~\eqref{lem250102}),  it holds that
$$ \left|(g-g_{N,\boldsymbol{\eta}})\left(\left\{\frac{\boldsymbol{z}-l\boldsymbol{H}_{N}}{N}\right\}\right) \right|^{2}\le \frac{6(2B+1)^d}{\sigma_{2}N^{d/2}}\quad \text{for}\quad \text{every}\quad l\in \mathbb{Z}\cap [-L,L],$$
 with probability at least $1-(2L+1)\cdot\sigma_2/2$ over the randomness of  $\boldsymbol{z}$ and $\boldsymbol{\eta}$. 

Recalling the condition $N\ge \Theta((L^{2/d}B^2)/\epsilon^{4/d})$ and choosing $\sigma_{2}=\frac{1}{30(2L+1)}$, we have
$$ \left|\sum_{|l|\le L}(g-g_{N,\boldsymbol{\eta}})\left(\left\{\frac{\boldsymbol{z}-l\boldsymbol{H}_{N}}{N}\right\}\right) G_{r,l}\right|\le \Theta(\epsilon),$$
with probability at least $1-\frac{1}{30}$ over the randomness of  $\boldsymbol{z}$ and $\boldsymbol{\eta}$.
Combining the above inequality and (Eq.~\eqref{21}), we obtain 
\begin{equation}\label{fff9}
\mathbb{P}\left\{\left|I(g)_{N,\boldsymbol{\eta},\boldsymbol{H}_{N},L,r,\boldsymbol{z}} \right|\le  \Theta\left( \epsilon\right) \right\}\ge 1-\sigma-2/30.
\end{equation} 
 Combining (Eq.~\eqref{fff8}) and (Eq.~\eqref{fff9}), we see that the total failure probability is at most $2\sigma+1/10$. Replacing $\sigma$ by  $\sigma/2$, we complete the proof of   (Eq.~\eqref{T31}) and (Eq.~\eqref{T32}). By the same approach as in the proof of (Eq.~\eqref{253}), (Eq.~\eqref{T33}) holds true.
\end{proof}
\section{Numerical Experiments}
In this section, we employ the numerical examples from existing literature to demonstrate that
our algorithm is implementable and capable of achieving a convergence order comparable to those
reported in the literature. It is crucial to point out that while the objective of this paper is to
propose an algorithm that theoretically enjoys high convergence order, this does not imply that
our algorithm outperforms existing algorithms in terms of specific numerical examples. Further
analysis on this aspect is conducted in the ``Future work'' section.

For our numerical experiments, we will consider four test functions: a differentiable function $f_1$, a non-differentiable function $f_2$, a discontinuous indicator function $f_3$, and a high-frequency oscillatory function $f_4$. These functions are explicitly defined as follows:
\begin{equation*}
\begin{split}
f_1(\boldsymbol{x})&=\prod_{j=1}^d\left[1+\frac{1}{j^4}B_{4}(x_{j})\right],\\
f_{2}(\boldsymbol{x})&=\prod_{j=1}^d\left[1+\frac{\left|4 x_j-2\right|-1}{j^{ 4 }}\right],\\
f_{3}(\boldsymbol{x})&=\mathbb{I}_{\left\{\sum_{i=1}^d x_i \geq d / 2\right\}}(\boldsymbol{x}),\\
f_{4}(\boldsymbol{x})&=\prod_{j=1}^d\left[1+\frac{\left|4 x_j-2\right|-1}{j^4}\right]+\sin(20000\pi x_{1}),
\end{split}
\end{equation*}
where $B_{4}(y)=y^4-2y^3+y^2-\frac{1}{30}$ denotes the 4th-degree Bernoulli polynomial.

The examples $f_1$ and $f_2$, provided by  \cite{dick2022component} and \cite{goda2024simple} respectively, are employed to validate the nearly optimal performance of random algorithm for integration in weighted Sobolev spaces. Meanwhile, the example $f_3$ was utilized in \cite{he2016extensible,he2015convergence} to discuss the efficacy of integration algorithms
for discontinuous functions.  Although $f_3$ can still be considered as a function in weighted Sobolev
space, it falls outside the scope of the algorithm proposed in \cite{dick2022component,goda2024simple} due to its order being less than
$1/2$. Finally, the function $f_{4}$ is constructed by introducing a high-frequency term to $f_2$. Across all these examples, we set $d=20$. The squared error of our algorithm is measured by:
 $${\rm error}:=\big|{\rm median}\{I(f_{N,\boldsymbol{\eta}_{j}})_{\boldsymbol{H}_{N_j},L,r,\boldsymbol{z}_{j}}\}_{j=1}^{t}-\operatorname{INT}(f)\big|^2.$$
Here, the number of samples is denoted by $M:=2L+1$ (see (Eq.~(\ref{1021}))). Consistent with \cite{goda2022construction,goda2024simple,pan2025automatic}, we exclude the number of repetitions $t$ from the calculation of the total sample size. We next specify the selection of parameters for our algorithm. For the functions $f_{1}$, $f_{2}$, $f_3$, and $f_4$, we set  $N=5600748293801$.  This choice is motivated by the fact that increasing $N$ enables the algorithm to achieve higher target accuracy and enhance its capability to handle high-frequency components (see Theorems \ref{T1}, \ref{weina}, and \ref{T3}). Thus, it is advisable to select a sufficiently large prime \(N\), provided that machine precision, rounding errors, and analogous numerical artifacts can be neglected.  
Regarding the selection of $r$, we consider two cases: if prior information about the order $s$ is available, we set  $r=L/\sqrt{2(s+1/2)\log(2L+1)}$. Otherwise, we choose  $r=L/\sqrt{2\log\left((2L+1)\log(2L+1)\right)}$ (see Remark \ref{piu}). Finally, we uniformly select $t=2\lceil\log_{2}(2L) \log_{2}(\log_{2}(2L))/2\rceil+1$ for the repetitions.

\begin{table}[h]
	\small{\caption{\emph{Example $f_1$~-- 
Numerical results }}}
	\label{Tab1}	\centering
	\begin{tabular}{llllll|lll}
		\hline
		\multicolumn{5}{c}{Our method }& &\multicolumn{3}{c}{The Owen-scrambled Sobol' method} \\
		\hline
		 ~~~$M$   &~$t$  &~~~~$Mt$  &~~MSE &~Order&~  &~~~~$M$ &~~~~~~\quad\quad\quad~MSE &~~~~\quad\quad~Order  \\
		\hline
		~~~~5 &~3  &~~~~~15 &~1.76E-4 & && ~~~~~4 &~~~~~~\quad\quad2.31E-5  &\\ \hline
		~~~~9 &~7  &~~~~~63 &~1.13E-5 &~~2.02 && ~~~~~8 &~~~~~~\quad\quad3.26E-6  &~~~~~~~\quad\quad2.82\\ \hline
        ~~~17 &~9  &~~~~153 &~1.54E-6 &~~2.38&& ~~~~16&~~~~~~\quad\quad3.88E-7  &~~~~~~~\quad\quad3.07\\ \hline
        ~~~33 &~13  &~~~~429 &~1.69E-8 &~~4.49&& ~~~~32 &~~~~~~\quad\quad5.76E-8  &~~~~~~~\quad\quad2.75\\ \hline
        ~~~65 &17  &~~~1105 &2.81E-10 &~~4.40&& ~~~~64 &~~~~~~\quad\quad6.75E-9  &~~~~~~~\quad\quad3.09\\ \hline
	~~129 &21  &~~~2709 &1.15E-11&~~3.59 &&~~~128 &~~~~~\quad\quad9.13E-10  &~~~~~~~\quad\quad2.89\\ \hline
	~~257 &25  &~~~6425 &1.79E-13&~~4.85&&~~~256 &~~~~~\quad\quad9.97E-11  &~~~~~~~\quad\quad3.20\\ \hline
	~~513 &31 &~~16416 &5.67E-15 &~~3.82&&~~~512 &~~~~~\quad\quad1.29E-11  &~~~~~~~\quad\quad2.95\\ \hline
	~1025 &35  &~~35875 &3.89E-16 &~~3.30&&~~1024 &~~~~~\quad\quad1.60E-12  &~~~~~~~\quad\quad3.01\\ \hline
    ~2049 &41  &~~84009 &2.93E-18 &~~5.75 &&~~2048 &~~~~~\quad\quad1.75E-13  &~~~~~~~\quad\quad3.19\\ \hline
	~4097 &45  &~184365 &6.99E-20 &~~4.75&&~~4096 &~~~~~\quad\quad2.13E-14  &~~~~~~~\quad\quad3.04\\ \hline
    ~8193 &51 &~417843 &1.22E-21  &~~4.94&&~~8192 &~~~~~\quad\quad2.72E-15  &~~~~~~~\quad\quad2.97\\ \hline
    16385 &55  &~901175 &3.17E-23  &~~4.75&&~16384 &~~~~~\quad\quad3.56E-16  &~~~~~~~\quad\quad2.94\\ \hline
    32769 &61  &1998909 &6.22E-25 &~~4.94&&~32768 &~~~~~\quad\quad4.15E-17  &~~~~~~~\quad\quad3.10\\ \hline
    65537 &65  &4259905 &9.55E-27 &~~5.52&&~65536 &~~~~~\quad\quad8.11E-18  &~~~~~~~\quad\quad2.34\\
    	\hline
	\end{tabular}
\end{table}

\begin{table}[h]
	\small{\caption{\emph{Example $f_2$~-- 
Numerical results }}}
	\label{Tab2}	\centering
	\begin{tabular}{llllll|lll}
		\hline
		\multicolumn{5}{c}{Our method }& &\multicolumn{3}{c}{The Owen-scrambled Sobol' method} \\
		\hline
		 ~~~$M$   &~$t$  &~~~~$Mt$  &~~MSE &~Order&~  &~~~~$M$ &~~~~~~~\quad\quad\quad MSE &~~~~~\quad\quad Order  \\
		\hline
		~~~~5 &~3  &~~~~~15 &5.79E-2 & && ~~~~~4 &~~~~~~\quad\quad2.19E-2  &\\ \hline
		~~~~9 &~7  &~~~~~63 &2.30E-3 &~~2.37&& ~~~~~8 &~~~~~~\quad\quad2.94E-3  &~~~~~~~\quad\quad2.90\\ \hline
        ~~~17 &~9  &~~~~153 &6.97E-4 &~~1.42&& ~~~~16 &~~~~~~\quad\quad3.22E-4  &~~~~~~~\quad\quad3.19\\ \hline
        ~~~33 &13  &~~~~429 &2.50E-5 &~~3.32&& ~~~~32 &~~~~~~\quad\quad4.94E-5  &~~~~~~~\quad\quad2.70\\ \hline
        ~~~65 &17  &~~~1105 &1.66E-6 &~~2.91&& ~~~~64 &~~~~~~\quad\quad5.16E-6  &~~~~~~~\quad\quad3.26\\ \hline
	~~129 &21  &~~~2709 &1.95E-8&~~5.00 &&~~~128 &~~~~~~\quad\quad7.28E-7  &~~~~~~~\quad\quad2.83\\ \hline
	~~257 &25  &~~~6425 &1.40E-8&~~0.39&&~~~256 &~~~~~~\quad\quad1.00E-7  &~~~~~~~\quad\quad2.86\\ \hline
	~~513 &31 &~~16416 &1.35E-9 &~~2.59&&~~~512 &~~~~~~\quad\quad1.33E-8  &~~~~~~~\quad\quad2.91\\ \hline
	~1025 &35  &~~35875 &1.26E-10 &~~2.92&&~~1024 &~~~~~~\quad\quad1.54E-9  &~~~~~~~\quad\quad3.11\\ \hline
    ~2049 &41  &~~84009 &9.95E-12 &~~2.99 &&~~2048 &~~~~~\quad\quad2.60E-10  &~~~~~~~\quad\quad2.57\\ \hline
	~4097 &45  &~184365 &6.95E-13 &~~3.39&&~~4096 &~~~~~\quad\quad2.36E-11  &~~~~~~~\quad\quad3.46\\ \hline
    ~8193 &51 &~417843 &1.09E-13  &~~2.26&&~~8192 &~~~~~\quad\quad3.70E-12  &~~~~~~~\quad\quad2.67\\ \hline
    16385 &55  &~901175 &7.45E-15  &~~3.49&&~16384 &~~~~~\quad\quad3.32E-13  &~~~~~~~\quad\quad3.47\\ \hline
    32769 &61  &1998909 &7.93E-16 &~~2.81&&~32768 &~~~~~\quad\quad4.69E-14  &~~~~~~~\quad\quad2.82\\ \hline
    65537 &65  &4259905 &6.28E-17 &~~3.35&&~65536 &~~~~~\quad\quad7.40E-15  &~~~~~~~\quad\quad2.66\\
    	\hline
	\end{tabular}
\end{table}
\begin{table}[h]
	\small{\caption{\emph{Example $f_3$~-- 
Numerical results }}}
	\label{Tab3}	\centering
	\begin{tabular}{llllll|lll}
		\hline
		\multicolumn{5}{c}{Our method }& &\multicolumn{3}{c}{The Owen-scrambled Sobol' method} \\
		\hline
		 ~~~$M$   &~$t$  &~~~~$Mt$  &~~MSE &~Order&~  &~~~~$M$ &~~\quad\quad\quad MSE &~\quad\quad Order  \\
		\hline
		~~~~5 &~3  &~~~~~15 &3.01E-2 & && ~~~~~4 &\quad\quad 2.43E-2  &\\ \hline
		~~~~9 &~7  &~~~~~63 &8.90E-3 &~~0.90&& ~~~~~8 &\quad\quad 8.44E-3  &~~~\quad\quad 1.53\\ \hline
        ~~~17 &~9  &~~~~153 &4.91E-3 &~~0.71&& ~~~~16 &\quad\quad 3.60E-3  &~~~\quad\quad 1.23\\ \hline
        ~~~33 &13  &~~~~429 &1.42E-3 &~~1.25&& ~~~~32 &\quad\quad 1.91E-3  &~~~\quad\quad 0.91\\ \hline
        ~~~65 &17  &~~~1105 &4.61E-4 &~~1.19&& ~~~~64 &\quad\quad 8.52E-4  &~~~\quad\quad 1.17\\ \hline
	~~129 &21  &~~~2709 &1.66E-4&~~1.15 &&~~~128 &\quad\quad 6.29E-4  &~~~\quad\quad 0.44\\ \hline
	~~257 &25  &~~~6425 &8.63E-5 &~~0.76&&~~~256 &\quad\quad 2.70E-4  &~~~\quad\quad 1.22\\ \hline
	~~513 &31 &~~16416 &2.89E-5 &~~1.21&&~~~512 &\quad\quad 1.47E-4  &~~~\quad\quad 0.88\\ \hline
	~1025 &35  &~~35875 &1.53E-5 &~~0.78&&~~1024 &\quad\quad 8.76E-5  &~~~\quad\quad 0.74\\ \hline
    ~2049 &41  &~~84009 &5.22E-6 &~~1.26 &&~~2048 &\quad\quad 3.05E-5  &~~~\quad\quad 1.52\\ \hline
	~4097 &45  &~184365 &2.06E-6 &~~1.18&&~~4096 &\quad\quad 1.57E-5  &~~~\quad\quad 0.96\\ \hline
    ~8193 &51 &~417843 &7.46E-7  &~~1.24&&~~8192 &\quad\quad 7.25E-6  &~~~\quad\quad 1.15\\ \hline
    16385 &55  &~901175 &4.92E-7  &~~0.54&&~16384 &\quad\quad 2.72E-6  &~~~\quad\quad 1.41\\ \hline
    32769 &61  &1998909 &1.87E-7 &~~1.21&&~32768 &\quad\quad 2.01E-6  &~~~\quad\quad 0.44\\ \hline
    65537 &65  &4259905 &9.34E-8 &~~0.92&&~65536 &\quad\quad 9.06E-7  &~~~\quad\quad 1.15\\
    	\hline
	\end{tabular}
\end{table}

\begin{table}[h]
	\small{\caption{\emph{Example $f_4$~--Numerical results}}}
	\label{Tab4}	\centering
	\begin{tabular}{llllll|lll}
		\hline
		\multicolumn{5}{c}{Our method }& &\multicolumn{3}{c}{The Owen-scrambled Sobol' method} \\
		\hline
		 ~~~$M$   &~$t$  &~~~~$Mt$  &~~MSE &Order&~  &~~~$M$ &~~\quad\quad\quad MSE &~~\quad\quad Order  \\
		\hline
		~~~~5 &~3  &~~~~~15 &~1.99E-1 & && ~~~~~4 &\quad\quad1.41E-1  &~~~\quad\quad\\ \hline
        ~~~~9 &~7  &~~~~~63 &~1.60E-2 &~~1.85 && ~~~~~8 &\quad\quad6.87E-2  &~~~\quad\quad1.04\\ \hline
        ~~~17 &~9  &~~~~153 &~2.01E-3 &~~2.48&& ~~~~16 &\quad\quad3.18E-2  &~~~\quad\quad1.11\\ \hline
        ~~~33 &13  &~~~~429 &~5.81E-5 &~~3.53&& ~~~~32 &\quad\quad1.41E-2  &~~~\quad\quad1.18\\ \hline
        ~~~65 &17  &~~~1105 &~3.55E-6 &~~3.00&& ~~~~64 &\quad\quad7.94E-3  &~~~\quad\quad0.82\\ \hline
	~~129 &21  &~~~2709 &~1.92E-7&~~3.28 &&~~~128 &\quad\quad4.54E-3  &~~~\quad\quad0.81\\ \hline
	~~257 &25  &~~~6425 &~1.42E-8&~~3.03&&~~~256 &\quad\quad1.78E-3  &~~~\quad\quad1.35\\ \hline
	~~513 &31 &~~16416 &~1.13E-9 &~~2.80&&~~~512 &\quad\quad7.97E-4  &\quad\quad\quad~1.16\\ \hline
	~1025 &35  &~~35875 &5.98E-11 &~~3.62&&~~1024 &\quad\quad5.66E-4  &~~~\quad\quad0.49\\ \hline
    ~2049 &41  &~~84009 &1.25E-11 &~~1.84 &&~~2048 &\quad\quad2.27E-4  &~~~\quad\quad1.32\\ \hline
	~4097 &45  &~184365 &1.02E-12 &~~3.19&&~~4096 &\quad\quad1.14E-4  &~~~\quad\quad0.99\\ \hline
    ~8193 &51 &~417843 &8.60E-14  &~~3.02&&~~8192 &\quad\quad6.29E-5  &~~~\quad\quad0.86\\ \hline
    16385 &55  &~901175 &1.13E-14  &~~2.64&&~16384 &\quad\quad2.29E-5  &~~~\quad\quad1.46\\ \hline
    32769 &61  &1998909 &6.00E-16 &~~3.69&&~32768 &\quad\quad4.53E-6  &~~~\quad\quad2.34\\ \hline
    65537 &65  &4259905 &8.31E-17 &~~2.61&&~65536 &\quad\quad5.89E-7  &~~~\quad\quad2.94\\
    	\hline
	\end{tabular}
\end{table}
\begin{figure}
\centering
\includegraphics[width=0.6\textwidth]{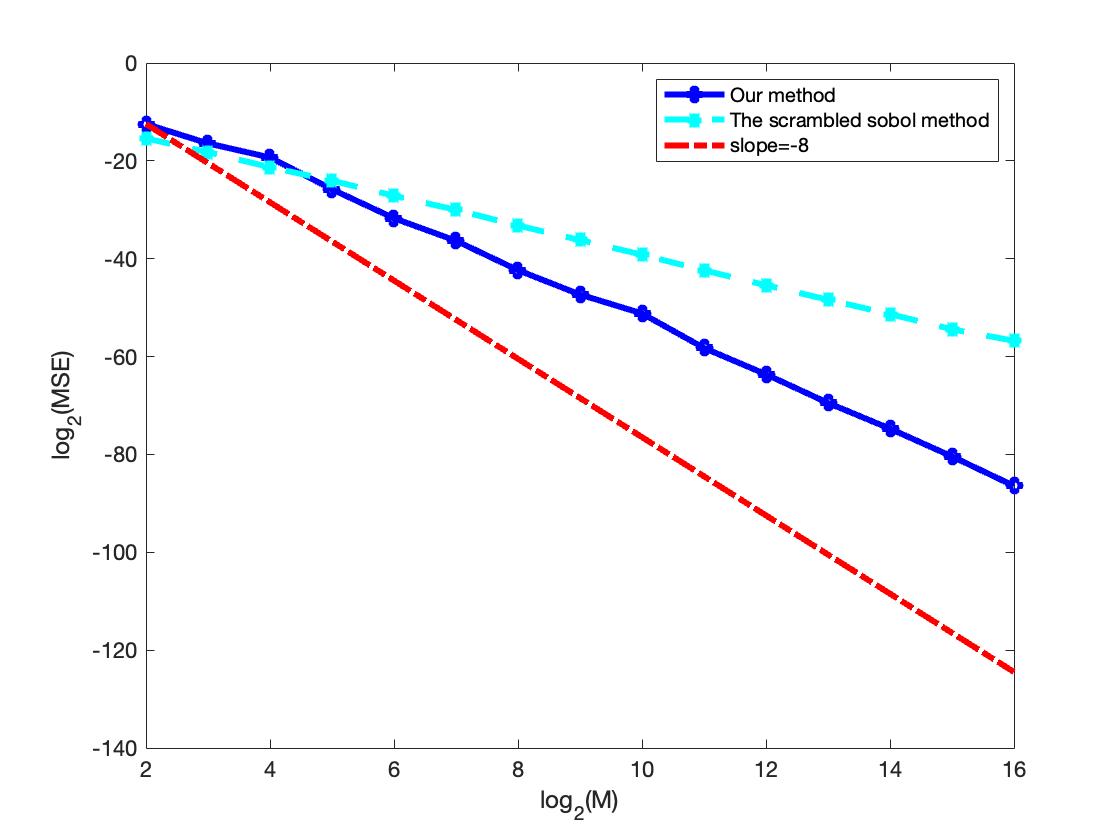} 
\caption{Convergence behavior of our method for $f_{1}$ with $M=2L+1$, $L= 2^k\,(k=1,2,\dots,15)$, and $r=L/\sqrt{8\log(2L+1)}$. The Owen-scrambled Sobol' method employs sample sizes of $M=2^k$ for $k=2,3,\dots,16$. The  MSE are computed based on $100$ runs. The theoretically optimal convergence order is $1/M^{8-\epsilon}$, where $\epsilon$ is an arbitrarily small positive number.} \label{fi1}
\end{figure}
\begin{figure}
\centering
\includegraphics[width=0.6\textwidth]{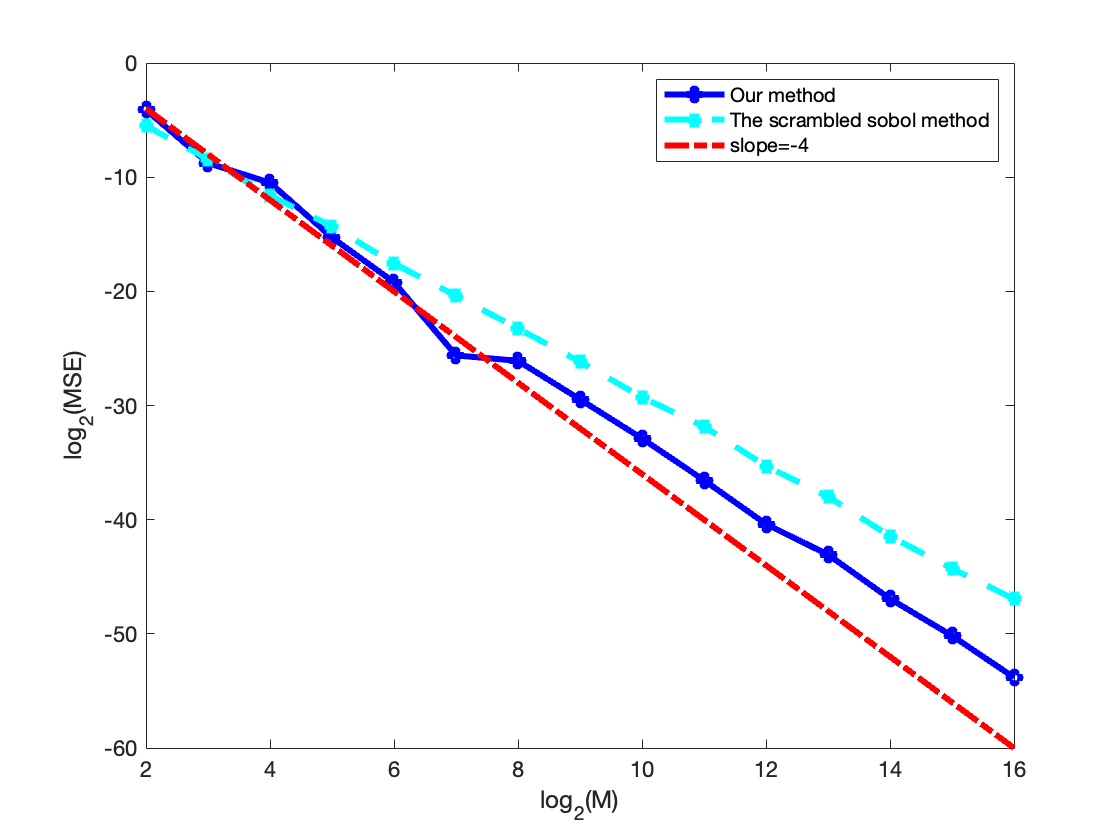} 
\caption{Convergence behavior of our method for $f_{2}$ with sample size $M=2L+1$, $L= 2^k\,(k=1,2,\dots,15)$, and $r=L/\sqrt{4\log(2L+1)}$. The Owen-scrambled Sobol' method employs sample sizes of $M=2^k$ for $k=2,3,\dots,16$. The MSE are computed based on $100$ runs. The theoretically optimal convergence order is $1/M^{4-\epsilon}$, where $\epsilon$ is an arbitrarily small positive number.}  \label{fi2}
\end{figure}

\begin{figure}
\centering
\includegraphics[width=0.6\textwidth]{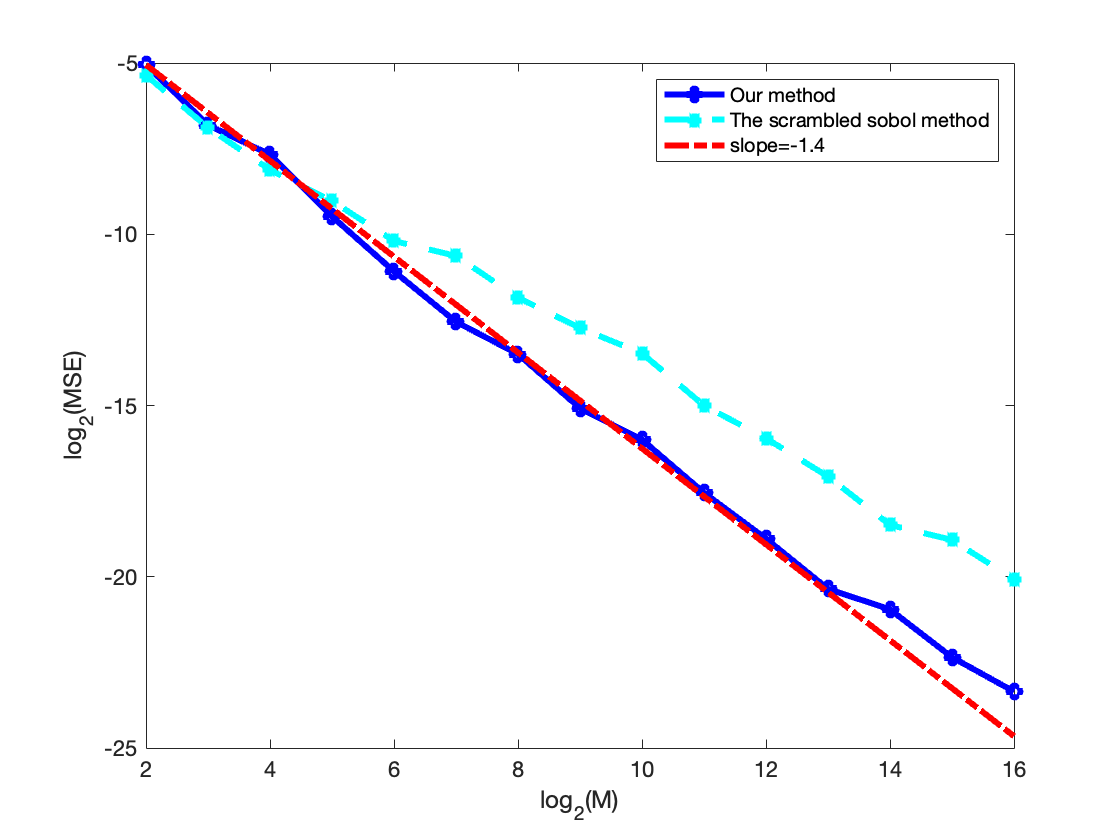} 
\caption{Convergence behavior of our method for $f_{3}$ with sample size $M=2L+1$, $L= 2^k\,(k=1,2,\dots,15)$, and $r=L/\sqrt{2\log(2L+1)}$. The Owen-scrambled Sobol' method employs sample sizes of $M=2^k$ for $k=2,3,\dots,16$. The MSE are computed based on $100$ runs.}  \label{fi3}
\end{figure}

\begin{figure}
\centering
\includegraphics[width=0.6\textwidth]{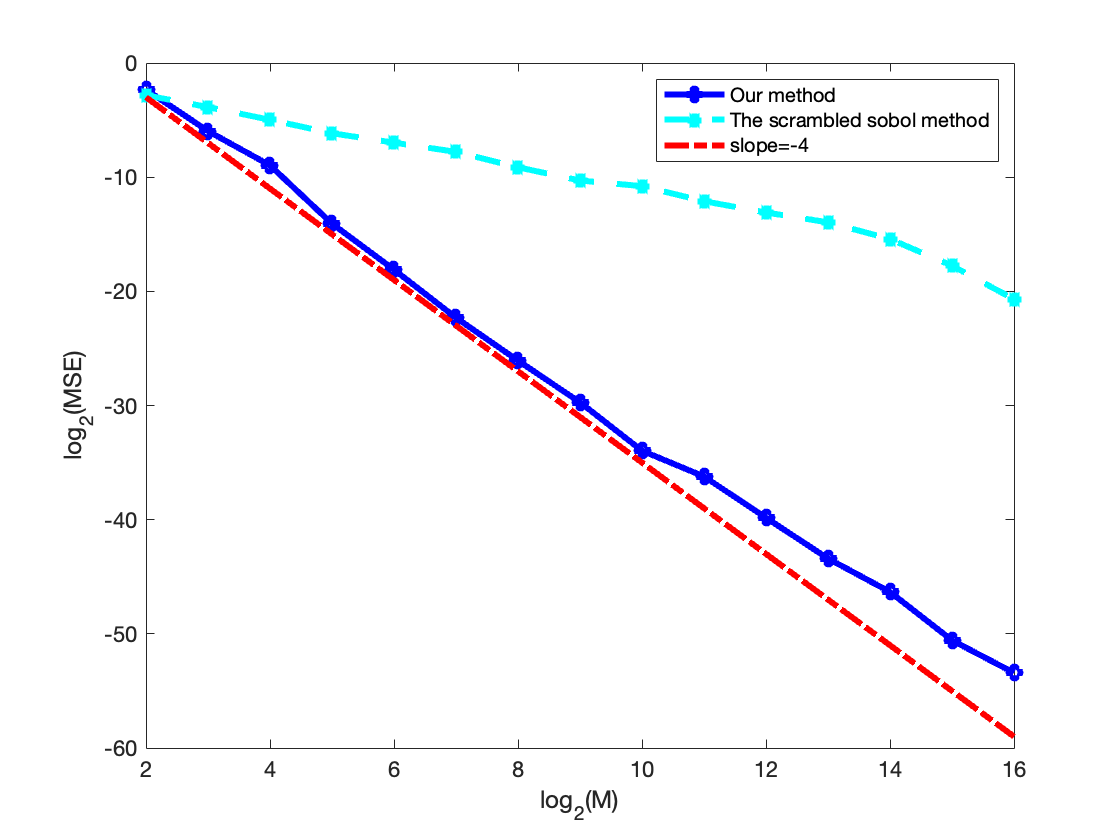} 
\caption{Convergence behavior of our method for $f_{4}$ with sample size $M=2L+1$, $L= 2^k\,(k=1,2,\dots,15)$, and $r=L/\sqrt{4\log(2L+1)}$. The Owen-scrambled Sobol' method employs sample sizes of $M=2^k$ for $k=2,3,\dots,16$. The MSE are computed based on $100$ runs.}  \label{fi4}
\label{fig:example}  
\end{figure}
Fig \ref{fi1} presents the numerical results of $f_1$, obtained with different values of $M$, revealing that the convergence order of our method exceeds $1/M^5$. The numerical results of $f_2$ shown in Fig \ref{fi2}, demonstrate a convergence order approaching $1/M^3$. For  test functions $f_1$ and $f_2$, our convergence rate is comparable to that reported in \cite{dick2022component} and \cite{goda2024simple}, achieved without any prior knowledge of the weights.  The gap between numerical and the theoretical optimal error arises from two  factors: (i) the
influence of the logarithmic factor in the error term, which is non-negligible in non-asymptotic
regimes. In particular, the power of the logarithmic factor increases with $s$, hence, for instance, the actual error for $f_{1}$ exhibits a larger gap from the theoretically optimal error bound than that for $f_{2}$; (ii) the influence of the constant \( C_{\alpha,s,\gamma,N} \) as specified in Theorem \ref{T3}. 
As the weight increases, this constant diminishes, thereby facilitating the numerical error to close to the theoretically optimal error. Consistent with this, numerical examples in \cite{goda2024simple} corroborate that larger weights enhance the alignment between the two.

The Owen-scrambled Sobol' sequence was adopted as the benchmark for comparison with our proposed method. For functions \(f_1\), \(f_2\), and \(f_3\), both methods exhibit comparable performance. Fig. \ref{fi4}  illustrates that the Owen-scrambled Sobol' method is significantly influenced by the high-frequency term introduced in test case \(f_4\) (when augmented with the median trick \cite{pan2023super}, the Owen-scrambled Sobol' method can achieve a super-polynomial convergence rate for integrals of analytic functions. It is important to emphasize that this convergence rate is asymptotic in nature). In contrast, ‌our algorithm is unaffected by high-frequency  term \(\sin(20000\pi x_1)\), provided that the numerical error incurred in evaluating \( \sin(20000\pi x_1) \) remains sufficiently small.

In Tabs. 1-4, we present the total sample size (including the number of repetitions $t$), and the corresponding errors that correspond to the numerical performance shown in Figs. 1–4, respectively. 
We compute the ``Order''   using the formula specified below: 
\begin{itemize}
\item
For our method, the ``Order'' in $j$-th row is given by:\\
\[
\text{Order}=\frac{\log_{2}\big(\text{MSE}((Mt)_{j-1})/\text{MSE}((Mt)_{j})\big)}{\log_{2}((Mt)_j/(Mt)_{j-1})},
\]
where \((Mt)_j\) and \((Mt)_{j-1}\) denote two values of 
\(Mt\) in $j$-th row and $j-1$-th row, respectively.  
\item For the Owen-scrambled Sobol' method, the   ``Order''  in $j$-th row is calculated as:
\[
\text{Order}=\frac{\log_{2}\big(\text{MSE}(M_{j-1})/\text{MSE}(M_{j})\big)}{\log_{2}(M_j/M_{j-1})},
\]
here, \(M_j\) and \(M_{j-1}\) represent two values of \(M\) in $j$-th row and $j-1$-th row, respectively.
\end{itemize}
The ``Order'' here characterizes the magnitude of the local slope of the error curve, similar to the curves presented in Figs. 1-4. The key difference lies in the abscissa: the abscissa is $\log_{2}(Mt)$ for our method. Due to this difference, the order of our method reported in Tabs. 1–4 exhibits a slight decrease compared to that in Figs. 1–4. Specifically, in Tabs. 1-4, the average orders of the last eight numerical results of our method are $4.72$, $ 2.98$, $ 1.05$, and $2.93$, respectively. As $L$ increases, the impact of the number of repetitions
defined as $t=2\lceil\log_{2}(2L) \log_{2}(\log_{2}(2L))/2\rceil+1$ 
on the convergence order becomes progressively negligible. 

\section{Future Work}

We have theoretically formulated a nearly optimal integration algorithm applicable to periodic isotropic Sobolev spaces and weighted Sobolev spaces, which is characterized by polynomial tractability. However, the upper bounds for the root mean square error (RMSE) in these two spaces involve the factors $\log^{1+\frac{2s}{d}}(M)$ and $\log^{1+2s}(M)$, respectively, where $M$ represents the sample size. This dependency poses challenges to the practical efficacy of the algorithm. We propose that an integration algorithm, which integrates the median trick with a random quasi-Monte Carlo rule, can potentially attain nearly optimal bounds, contingent upon the satisfaction of certain mild conditions. The algorithm is defined as follows:
$$
I(f):=\frac{1}{M}\sum_{0\le l< M} f_{\boldsymbol{\eta},M}\left(\left\{\frac{z-lH_{M}}{M}\right\}\right),
$$
where $M$ is a prime number, and $H_{M}$ and $z$ are sampled uniformly from the sets $\mathbb Z^{d}\cap [1,M)^{d}$ and $\mathbb Z^{d}\cap [0,M)^{d}$, respectively. The function $f_{\boldsymbol{\eta},M}$ is defined in (Eq.~(\ref{MCC})). This algorithm has the potential to reduce the logarithmic factors present in the upper bound. Under the condition \(s \le \Theta(d)\), it is posited that our proposed method may achieve nearly optimality and reduce the logarithmic factors present in the upper bound. Notably, our algorithm diverges from the one presented in \cite{goda2024simple} in two significant ways: (1) it does not utilize random prime selection; and (2) it incorporates non-uniform random shifts. This approach may allow our method to effectively address discontinuous functions (specifically when \(s \le 1/2\) as stated in Theorem 5.1) without necessitating prior knowledge of the weights or the smoothness order \(s\). In contrast, the algorithm described in \cite{goda2024simple} is unable to accommodate discontinuous functions, and the findings by \cite{nuyens2023randomised} rely on prior knowledge of the weights. We intend to investigate this algorithm further in future research. 

\section*{Acknowledgments}
The authors express their gratitude to the reviewers for their valuable feedback. Their constructive observations have substantially contributed to the overall quality and comprehensibility of the manuscript.

\end{document}